\documentclass[12pt]{amsart}

\usepackage[matrix,arrow,curve,frame]{xy}
\usepackage{amsmath,amsthm,amssymb,enumerate}
\usepackage{latexsym}
\usepackage{amscd}
\usepackage{hyperref}
\usepackage{euscript}
\usepackage{fullpage}
\usepackage{color}
\usepackage{tikz}
\usepackage{tikz-cd}
\usepackage[all]{xypic}

\usepackage{mathptmx,enumitem,cite,array}

\usepackage[dvips]{geometry}

\newtheorem{theorem}{Theorem}[section]
\newtheorem{lemma}[theorem]{Lemma}

\newtheorem{proposition}[theorem]{Proposition}

\newtheorem{example}[theorem]{Example}
\theoremstyle{example}
\newtheorem{remark}[theorem]{Remark}

\numberwithin{equation}{section}

\newcommand{\beq}{\begin{equation}}
\newcommand{\eeq}{\end{equation}}
\DeclareMathOperator{\ind}{index}

\newcommand{\ZZ}{\mathbb{Z}}
\newcommand{\RR}{\mathbb{R}}
\newcommand{\CC}{\mathbb{C}}

\DeclareMathOperator{\tr}{tr}

\DeclareMathOperator{\DInd}{D-Ind}
\DeclareMathOperator{\DsInd}{D^{spin}-Ind}

\DeclareMathOperator{\DInda}{D_1-Ind}
\DeclareMathOperator{\DIndb}{D_2-Ind}
\DeclareMathOperator{\DsIndb}{D^{spin}_2-Ind}

\DeclareMathOperator{\Ind}{Ind}

\newcommand{\Z}{\mathbb{Z}}

\newcommand{\pp}{\mathfrak{p}}
\newcommand{\ppc}{\mathfrak{p}_\CC}

\newcommand{\dirac}{\partial\!\!\!\!\!\slash}

\newcommand {\be}{\begin{equation}}
\newcommand {\ee}{\end{equation}}
\newcommand{\h}{\begin{eqnarray*}}
\newcommand{\e}{\end{eqnarray*}}


\begin{document}


\title
{Witten Genus and Elliptic genera for proper actions}

\author{Fei Han}
\address{Department of Mathematics,
National University of Singapore, Singapore 119076}
\email{mathanf@nus.edu.sg}

 \author{Varghese Mathai}
\address{School of Mathematical Sciences,
University of Adelaide, Adelaide 5005, Australia}
\email{mathai.varghese@adelaide.edu.au}

\subjclass[2010]{Primary 58J26, Secondary 57S20, 53C27, 11F55, 19K35}
\keywords{Elliptic genera, Witten genus, proper actions, loop Dirac induction, modularity, group $C^*$-algebras, representation ring, operator K-theory}
\date{}

\maketitle

\begin{abstract}
In this paper, we construct for the first time, the Witten genus and elliptic genera on noncompact manifolds with a proper cocompact action by an almost connected Lie group and prove  vanishing and rigidity results that generalise known results for compact group actions on compact manifolds. We also compute our genera for some interesting examples.
\end{abstract}

\tableofcontents

\section*{Introduction}

In 1970, Atiyah and Hirzebruch \cite{AH70} proved a striking result, showing that if a compact connected group of positive dimension acts 
non-trivially on a compact Spin manifold, then the equivariant index of the Spin Dirac operator vanishes,
and in particular, the $\hat A$ genus of the compact manifold also vanishes.
In 1980's, Witten studied two-dimensional quantum field theories 
and the index of Dirac operator in free loop spaces. 
In \cite{W87}, Witten argued that the partition function of a type II superstring as a function depending on the modulus 
of the worldsheet elliptic curve, is an elliptic genus.
In \cite{W86}, Witten derived a series of
twisted Dirac operators from the free loop space $LM$ on a compact Spin manifold $M$.
The elliptic genera constructed by Landweber-Stong \cite{LS88}
and Ochanine \cite{O87} in a topological way
turn out to be the indices of these elliptic operators.
Motivated by physics, Witten conjectured that these elliptic operators should be rigid. The Witten conjecture was first proved by Taubes \cite{T89} and Bott-Taubes \cite{BT89}. In \cite{Liu96}, using the modular invariance property, Liu
presented a simple and unified proof of the Witten conjecture. In \cite{Liu95}, continuing the Witten-Bott-Taubes-Liu rigidity theorem,  Liu discovered vast generalizations, in particular a profound vanishing theorem for the Witten genus under the condition that the equivariant first Pontryagin class $p_1(M)_{S^1}=n\cdot \pi^* u^2$, which, as observed by Dessai \cite{De1}, when the $S^1$-action is induced from an $S^3$-action, is equivalent to that the manifold is string, i.e. the free loop space is Spin \cite{ML}. Later Liu-Ma \cite{LM1, LM2} and Liu-Ma-Zhang \cite{LMZ1, LMZ2}
generalized the rigidity and vanishing theorems to the family case
 on the levels of equivariant Chern character and of equivariant $K$-theory. On the other hand, in algebraic geometry,  Totaro\cite{T00}  used the rigidity theorem for complex elliptic genus to study the question of which characteristic numbers can be defined on compact complex algebraic varieties with singularities.

In \cite{HM16}, Hochs and the second author extended the Atiyah-Hirzebruch theorem
in another direction,  to the non-compact setting. 
More precisely, let $M$ be a complete Riemannian manifold of even dimension, 
on which an almost connected Lie group $G$ acts properly and isometrically. 
Suppose $M/G$ is compact and $M$ has a $G$-equivariant Spin-structure. Suppose 
$G/K$ is even dimensional and $G$-Spin, where $K$ is a maximal compact
subgroup of $G$. Let 
\[
\ind_G(\dirac_M)  \in K_{0}(C^*_rG)
\]
 be the equivariant index of the associated Spin-Dirac operator. 
 Here $K_{0}(C^*_rG)$ is the $K$-theory of the reduced group $C^*$-algebra of $G$, and $\ind_G$ denotes the analytic assembly map used in the Baum--Connes conjecture \cite{BCH94}, \cite{Kas88}.
Atiyah and Hirzebruch's vanishing result generalises as follows in \cite{HM16}.
If there is a point in $M$ whose stabiliser in $G$  
 is not a maximal compact subgroup of $G$ (this condition is called {\it properly nontrivial}) and $K$ has positive dimension, then
\[
\ind_G(\dirac_M) = 0  \in K_{0}(C^*_rG).
\]
The following fact follows immediately from the theorem above and Theorem 6.12 proved by Wang in \cite{HW}.
Under these hypotheses, one has
\begin{equation}\label{iformula} 
0=\tau_*(\ind_G(\dirac_M)) =\int_{M} c\cdot \hat{A}(M),
\end{equation}
 where  $c\in C^{\infty}_c(M)$ is a cutoff function, that is a non-negative function satisfying 
\be \label{cutoff}
\int_Gc(g^{-1}m)\,\mathrm{d}g=1,
\ee
and $\tau : C^*_r(G) \to \mathbb C$ denotes 
the von Neumann trace. The right hand side of \eqref{iformula} is independent of the choice of cut-off function $c$ and can be viewed the ``averaged $\hat A$ genus".

In this paper, we generalise the definitions of the Witten genus and 
elliptic genera to the situation of proper cocompact Lie group actions on noncompact manifolds and study their properties. In particular, we establish vanishing and rigidity properties of
the equivariant Witten genus and the equivariant elliptic genera respectively. We also compute our genera for some interesting examples to illustrate the difference between these  noncompact genera and the usual genera.

This would be a significant infinite dimensional generalisation of \cite{HM16} and should lead to significant advances. Atiyah and Hirzebruch's vanishing theorem for the $\hat A$ genus on compact Spin manifolds with a nontrivial action of a compact connected Lie group, sparked widespread international interest, especially after Witten
generalised their result to the rigidity of elliptic genera and Liu's discovery of vanishing theorem for Witten genus, which can be viewed as infinite dimensional analogs of the Atiyah-Hirzebruch theorem. Our
vanishing and rigidity theorems in this paper is the first result of this type in the noncompact world.
The innovation in our paper is to significantly generalise the method {\em quantisation commutes with induction} 
to cover the case of the Witten genus and elliptic genera. More precisely, we establish {\em quantisation commutes with twisted induction} diagrams, generalising quantisation commutes with induction theorems in the literature. 

Let $G$ be an almost connected Lie group and  suppose that $G$
acts properly and cocompactly on a smooth manifold $M$. Let $K$ be a maximal compact
subgroup of $G$. A theorem of Abels \cite{Abels} shows that $M$ can be realised as a fibre bundle over $G/K$ with fibre a compact $K$-manifold $N$. That is, $M=G\times_K N$. 
Suppose $G/K$ has a $G$-equivariant Spin-structure and $M$ has a $G$-equivariant Spin-structure, then by the 2 out of 3-lemma, $N$ has a $K$-equivariant Spin structure, and conversely. From now on, we will assume that $\dim M$ and $\dim(G/K)$ are even. Let 
$\mathfrak p$ be a complementary
subspace for the Lie algebra $\mathfrak k$ of $K$ in the  Lie algebra
$\mathfrak g$ of $G$  that it is invariant for the
adjoint action of $K$ and we endow $\mathfrak p$ with a $K$-invariant
euclidian metric. The above assumption means that the homomorphism $K\to
\mathrm{SO}(\mathfrak p)$ lifts to $\mathrm{Spin}(\mathfrak p)$. Let $R(K)$ be the complex representation ring of $K$. Then one has the {\em quantisation commutes with induction} diagram \cite{HM16, GMW}: 
\be \label{qi}
\xymatrix{
K_{0}^G(M) \ar[rrrr]^-{ \ind_G} & & & & K_{0}(C^*_{r}G) \\
K_{0}^K(N) \ar[u]^-{K-{\rm Ind}_K^G} \ar[rrrr]^-{\ind_K} & & &
 & R(K)  \ar[u]_-{\DInd_{K}^{G}
 } 
},
\ee
where $K_{0}^G(M)$ denotes the equivariant K-homology of $M$, the left vertical arrow is analytic induction from $K$ to $G$ and the right vertical arrow is the {\it Dirac induction} from $K$ to $G$. See \cite{HM16, GMW} for the details. 

Our proof of the vanishing of Witten genus essentially requires us to establish the commutativity
of the following diagram:
\be
\xymatrix{
K_{0}^G(M) \ar[rrrr]^-{ \ind_G(\bullet \,\,\, \otimes \Theta(T_\CC M,\tau))} & & & & K_{0}(C^*_{r}G) [[q]]\\
K_{0}^K(N) \ar[u]^-{K-{\rm Ind}_K^G} \ar[rrrr]^-{\ind_K(\bullet \,\,\, \otimes \Theta(T_\CC N, \tau))} & & &
 & R(K) [[q]] \ar[u]_-{\DInd_{LK}^{LG}
 }
 },
\ee
where $\DInd_{LK}^{LG}(\bullet) = \ind_G(\dirac_{G/K} \otimes \Theta(\ppc, \tau)\otimes\,\,\,\bullet)$ is a loop version of Dirac induction. 
Our proof of the rigidity of the elliptic genera essentially requires us to establish the commutativity
of the following diagrams:
\be 
\xymatrix{
K_{0}^G(M) \ar[rrrrrr]^-{ \ind_G(\bullet \,\,\, \otimes \Delta^+(TM)\oplus\Delta^-(TM))_\CC\otimes\Theta_1(T_\CC M,\tau))} & & & & & & K_{0}(C^*_{r}G) [[q^{1\over2}]]\\
K_{0}^K(N) \ar[u]^-{K-{\rm Ind}_K^G} \ar[rrrrrr]^-{\ind_K(\bullet \,\,\, \otimes \Delta^+(TN)\oplus\Delta^-(TN))_\CC\otimes \Theta_1(T_\CC N, \tau))} & & & & &
 & R(K) [[q^{1\over2}]] \ar[u]_-{\DInda_{LK}^{LG}
 } 
}
\ee
and 
\be 
\xymatrix{
K_{0}^G(M) \ar[rrrr]^-{ \ind_G(\bullet \,\,\, \otimes\Theta_2(T_\CC M,\tau))} & & & & K_{0}(C^*_{r}G) [[q^{1\over2}]]\\
K_{0}^K(N) \ar[u]^-{K-{\rm Ind}_K^G} \ar[rrrr]^-{\ind_K(\bullet \,\,\, \otimes \Theta_2(T_\CC N, \tau))} & & &
 & R(K) [[q^{1\over2}]] \ar[u]_-{\DIndb_{LK}^{LG}
 } 
},
\ee
where $\DInda_{LK}^{LG}$ is a loop version of Dirac induction given explicitly by
$$\ind_G(\dirac_{G/K}\otimes(\Delta^+(\mathfrak p)\oplus\Delta^-(\mathfrak p))_\CC\otimes\Theta_1({\mathfrak p}_\CC, \tau)\otimes\,\,\,\bullet)$$ and $\DIndb_{LK}^{LG}$ is a loop version of Dirac induction given explicitly by $$\ind_G(\dirac_{G/K} \otimes \Theta_2(\ppc, \tau)\otimes \,\,\,\bullet).$$ Here $\Delta^+(TM)\oplus\Delta^-(TM)$ stands for the space of the spinor bundle and similarly for $TN$ and $\pp$. The constructions of $\Theta, \Theta_j, j=1, 2$ are given in in Section 2. These diagrams generalise diagram (\ref{qi}) without $q$.

The paper is organized as follows. In Section 1, we give some preliminary concepts and results used in the paper. In Section 2, we give the construction of the Witten genus and elliptic genus for the proper actions, both the averaged version and the equivariant version. We present our main results about these genera, including modularity, a miraculous cancellation formula as well as vanishing and rigidity results. We also present the explicit formula for the genera for a class of interesting examples related to $G=SL(2, \RR)$ and leave the computation to the last section. In Section 3, we give the proof of the vanishing theorem. We prove the rigidity theorems in Section 4. In Section 5, we give the detailed computation for the genera of the examples. In Appendix, we study noncompact equivariant Witten genus and equivariant elliptic genera in the situation that $G/K$ is not $G$-Spin.

$\, $

\noindent{\em Acknowledgements.}
Fei Han was partially supported
by the grant AcRF R-146-000-218-112 from National University
of Singapore. Varghese Mathai was partially
supported by funding from the Australian Research Council, through the Australian Laureate Fellowship FL170100020. He also thanks 
Hao Guo and Hang Wang for clarifications on \cite{GMW}. Both authors thank Yanli Song for pointing out an error in a conjecture in the previous version of the paper.


\section{Preliminaries}


This section contains some preliminary concepts and results used in the paper.

Let $G$ be a locally compact, almost connected Lie group. In this paper, we will use the reduced $C^*$-algebra of $G$, denoted $C^*_r(G)$, 
which is the completion in the operator norm of the convolution algebra of integrable functions  $L^1(G)$ with respect to a Haar measure, 
viewed as an algebra of bounded operators on $L^2(G)$.

The $K$-theory of $C^*_r(G)$, denoted $K_0(C^*_r(G))$, is by fiat the Grothendieck group generated by stable equivalence classes of finitely generated modules over $C^*_r(G)$.

If $K$ is a compact connected Lie group, denote by $R(K)$ the representation ring of $K$, which is the free abelian group generated by 
equivalence classes of irreducible representations of $K$. It is well known that $R(K) \cong K_0(C^*_r(K))$. 

For example (cf. \cite{Valette}), when $G=SL(2, \RR)$, the reduced group $C^*$-algebra $C^*_r(G)$,  is Morita equivalent to 
$$
C_0(\RR/\ZZ_2) \bigoplus C_0(\RR) \rtimes \ZZ_2 \bigoplus_{n\in \ZZ\setminus \{0\}} \CC.
$$
In the last term, there is a copy of $\CC$ for each discrete series representation of $G$. Using the fact that $K$-theory
is Morita invariant and that the second term contributes a factor of $\ZZ$ whereas the first term doesn't contribute to $K$-theory, one has
$$
K_0(C^*_r(G)) \cong \bigoplus_{n\in \ZZ} \ZZ [n], 
$$
where $\ZZ[n]\cong\ZZ$.

There is a canonical morphism, $\DInd_K^G : R(K) \longrightarrow K_0(C^*_r(G))$ called {\em Dirac induction}, defined as follows. Assume that
$G/K$ is $G$-Spin, which is always the case for a double cover. Given a unitary representation $\rho:K \to U(V)$ of $K$, form the 
$G$-vector bundle $V_\rho = G \times_K V \to G/K$ over $G/K$. Let $\dirac_{G/K}$ denote the Dirac operator on $G/K$. Then 
$$\DInd_K^G (\rho) = \ind_G (\dirac_{G/K} \otimes V) \in K_0(C^*_r(G)).$$
The central {\em Connes-Kasparov  conjecture} (cf. \cite{BCH94}, \cite{Kas83}) states that the Dirac induction is an {\em isomorphism}.
It has been proved for reductive Lie groups by \cite{Lafforgue}, \cite{LafforgueICM}, \cite{wassermann} and in general 
by \cite{CEN}.

Let $G$ act properly, cocompactly on a manifold $M$.
By a result in \cite{Phillips}, for any almost connected Lie group $G$, the equivariant $K$-theory $K^0_G(M)$ is the Grothendieck
group generated by stable equivalence classes of  finite dimensional $G$-vector bundles over $M$. 
We will be assuming that $M$ is either $G$-Spin or $G$-Spin$^c$. In either case, there is a Poincar\'e duality isomorphism in  $K$-theory (cf. \cite{GMW}),
$$
K^0_G(M) \longrightarrow K_0^G(M), \qquad E \longrightarrow \dirac \otimes E,
$$
where $K_0^G(M)$ denotes the equivariant $K$-homology of $M$, which is therefore generated by twisted Dirac operators.

The equivariant index is defined by Kasparov using bivariant $K$-theory machinery. 
Using induction to the twisted crossed product (cf. \cite{Kas88})
$$
j_G : K_0^G(M) \longrightarrow KK_0(C_0(M) \rtimes G, C^*_r(G)), 
$$
and a cutoff function $c$ on $M$ (see (\ref{cutoff})), it defines an idempotent $[c] \in  KK_0(\CC, C_0(M) \rtimes G)$, the equivariant index is by fiat
the Kasparov intersection product  (cf. \cite{Kas88}),
$$
\ind_G(\dirac \otimes E) = [c] \otimes_{C_0(M) \rtimes G} j_G([\dirac \otimes E]) \in K_0(C^*_r(G)).
$$


\section{Witten genus and elliptic genera for proper actions}


In this section, we introduce the Witten genus and the elliptic genera for proper actions and present the main results about them to be proved in the next sections. 

\subsection{Witten genus and elliptic genera for proper actions: modularity}

$\, $

Let $G$ be an almost connected Lie group and  suppose that $G$
acts properly and cocompactly on a $4k$ dimensional manifold $M$. 

If $E$ is a complex vector bundle over $M$, set
$\widetilde{E}=E-\mathbb{C}^{\mathrm{rk}(E)}. $ 
Recall that for an indeterminate $t$, \be \Lambda_t(E)=\CC
|_M+tE+t^2\Lambda^2(E)+\cdots,\ \ \ S_t(E)=\CC |_M+tE+t^2
S^2(E)+\cdots, \ee are the total exterior and
symmetric powers of $E$ respectively. The following relations between these two operations hold (c.f. \cite{A67}), 
\be S_t(E)=\frac{1}{\Lambda_{-t}(E)},\ \ \ \
\Lambda_t(E-F)=\frac{\Lambda_t(E)}{\Lambda_t(F)}.
\ee

Let $q=e^{2 \pi \sqrt{-1}\tau}$ with $\tau \in \mathbb{H}$, the
upper half complex plane.

Introduce three elements (\cite{W86}) in
$K(M)[[q^{1\over2}]]$, which consist of formal power series in
$q^{1\over2}$ with coefficients in the $K$-group of $M$:

\be \Theta(T_\CC M, \tau)=\bigotimes_{n=1}^\infty
S_{q^n}(\widetilde{T_\CC M}),\ee

\be
\Theta_1(T_\CC M, \tau)=\bigotimes_{n=1}^\infty S_{q^n}(\widetilde{T_\CC
M}) \otimes \bigotimes_{m=1}^\infty \Lambda_{q^m}(\widetilde{T_\CC
M}),\ee

\be \Theta_2(T_\CC M,\tau)=\bigotimes_{n=1}^\infty
S_{q^n}(\widetilde{T_\CC M}) \otimes \bigotimes_{m=1}^\infty
\Lambda_{-q^{m-{1\over2}}}(\widetilde{T_\CC M}).\ee

One can formally expand these elements into Fourier series,

\be \Theta(T_\CC M, \tau)=W_0(T_\CC M)+W_1(T_\CC M)q+\cdots,
\ee

\be \Theta_1(T_\CC M, \tau)=A_0(T_\CC M)+A_1(T_\CC
M)q^{1\over2}+\cdots,\ee

\be \Theta_2(T_\CC M,\tau)=B_0(T_\CC M)+B_1(T_\CC M)q^{1\over2}+\cdots.
\ee

Let $\nabla^{TM}$ be a $G$-invariant connection on $TM$ and $R^{TM}=(\nabla^{TM})^2$ be the curvature of $\nabla^{TM}$. Suppose $E$ is a $G$-equivariant complex vector bundle over $M$, $\nabla^E$ is a $G$-invariant connection on $E$ and $R^{E}$ is the curvature of $\nabla^E$. 

Define 
\be \hat{A}^c(M;E) = \displaystyle \int_{M} c\cdot \hat{A}(M)\mathrm{ch}(E),\ee 
\be \hat{L}^c(M;E) = \displaystyle \int_{M} c\cdot \hat{L}(M)\mathrm{ch}(E),\ee 
where
$$\hat{A}(M)={\det}^{1/2}\left({{\sqrt{-1}\over
4\pi}R^{TM} \over \sinh\left({ \sqrt{-1}\over
4\pi}R^{TM}\right)}\right)$$
and $$\hat{L}(M)={\det}^{1/2}\left({{\sqrt{-1}\over 2\pi}R^{TM} \over \tanh\left({
\sqrt{-1}\over 4\pi}R^{TM}\right)}\right)$$ are the Hirzebruch $\hat A$-form and $\hat L$-form respectively and 
$$\mathrm{ch}(E)=\tr\left[\exp\left({\sqrt{-1}\over 2\pi}R^E\right)\right]$$ is the Chern character form.

$\hat{A}^c(M;E)$ and  $\hat{L}^c(M;E)$ are independent of the choice of the cutoff function $c$ and the connections \cite{HW}.

The virtual bundles $W_i(T_\CC M), A_i(T_\CC M)$ and $B_i(T_\CC M)$ carry connections induced from $\nabla^{TM}$. 

Define the {\it Witten genus of $(M, G)$} by
\be 
\varphi_W^c(M,\tau) = \hat{A}^c(M; \Theta(T_\CC M, \tau)) \in \RR[[q]].
\ee
That is, 
\be 
\varphi_W^c(M,\tau) =  \hat{A}^c(M) + \hat{A}^c(M;W_1(T_\CC M)) q + \cdots \in \RR[[q]].
\ee

Define the {\it elliptic genera of $(M, G)$}  by
\be 
\varphi_1^c(M, \tau) = \hat{L}^c(M; \Theta_1(T_\CC M, \tau)) \in \RR[[q^{1\over2}]],
\ee
\be 
\varphi_2^c(M,\tau) = \hat{A}^c(M; \Theta_2(T_\CC M, \tau)) \in \RR[[q^{1\over2}]].
\ee
That is, 
\be 
\varphi_1^c(M,\tau) = \hat{L}^c(M) + \hat{L}^c(M; A_1(T_\CC M)) q^{1\over2} + \cdots \in \RR[[q^{1\over2}]],
\ee 
\be 
\varphi_2^c(M,\tau) = \hat{A}^c(M) + \hat{A}^c(M; B_1(T_\CC M)) q^{1\over2} + \cdots \in \RR[[q^{1\over2}]].
\ee 

These genera can be viewed as ``averaged Witten genus and averaged elliptic genera" of $M$. To illustrate the difference between these averaged genera and the usual genera, we explicitly compute them in the following examples. 

\begin{example} \label{example} Let $S^1$ act on the complex projective space $\CC P^{2l-1}$ by
\be  \lambda [z_0, z_1, \cdots, z_{2l-1}]=[\lambda^{a_0}z_0,  \lambda^{a_1}z_1, \cdots, \lambda^{a_{2l-1}}z_{2l-1}],\ee
such that $a_i$'s are distinct integers and $\sum_{i=0}^{2l-1} a_i$ is even. 

Let $S^1$ be the subgroup of $SL(2, \RR)$ of matrices of the form $\left(\begin{array}{cc}
                                      \cos\theta&-\sin\theta\\
                                      \sin\theta&\cos\theta
                                     \end{array}\right)$.
Then
 $$M=SL(2, \RR)\times_{S^1}\CC P^{2l-1}$$
 is a $4l$-dimensional manifold with proper and cocompact $SL(2, \RR)$ action. 
 
Consider the two-variable series
\be 
\begin{split}
&\left[\prod_{n=1}^\infty(1-q^n)^{4l}\right]\cdot\left[\prod_{n=1}^\infty\left(1+\sum_{i=1}^\infty (\lambda^2q^n)^i\right)\left(1+\sum_{i=1}^\infty (\lambda^{-2}q^n)^i \right)\right] \\
&\cdot \left[\sum_{j=0}^{2l-1}\prod_{s\neq j}\frac{1}{\left(\lambda^{\frac{|a_s-a_j|}{2}}-\lambda^{\frac{-|a_s-a_j|}{2}} \right)\prod_{n=1}^\infty(1-\lambda^{|a_s-a_j|}q^n)(1-\lambda^{-|a_s-a_j|}q^n)}\right].\\
\end{split}
\ee
As $\sum_{i=0}^{2l-1} a_i$ is even, it is not hard to see that in the above series the coefficient of each $q^n$ is a Laurent polynomial of $\lambda$ with integral coefficients. Denote the above series by 
 \be P(\cdots, \lambda^{-n}, \lambda^{-n+1}, \cdots,  \lambda^{-1}, 1, \lambda, \cdots, \lambda^{m-1}, \lambda^{m}, \cdots; q). \ee                                     
Then the Witten genus of $M$ is
\be  
\varphi_W^c(M,\tau)=P(\cdots,  -|-n|, -|-n+1|, -1, 0, -1, \cdots, -|m-1|, -|m|, \cdots; q)\in \ZZ[[q]],
\ee i.e. the $q$-series obtained by replacing each $\lambda^n$ with $-|n-1|$ in $P$. 

For the elliptic genera, one has
\be  \varphi_1^c(M,\tau)=0, \ \   \varphi_2^c(M,\tau)=0. \ee
\end{example}

The detailed computation will be given in the last section of this paper.

$\, $

Let $$ SL_2(\mathbb{Z}):= \left\{\left.\left(\begin{array}{cc}
                                      a&b\\
                                      c&d
                                     \end{array}\right)\right|a,b,c,d\in\mathbb{Z},\ ad-bc=1
                                     \right\}
                                     $$
 as usual be the modular group. Let
$$S=\left(\begin{array}{cc}
      0&-1\\
      1&0
\end{array}\right), \ \ \  T=\left(\begin{array}{cc}
      1&1\\
      0&1
\end{array}\right)$$
be the two generators of $ SL_2(\mathbb{Z})$. Their actions on
$\mathbb{H}$ are given by
$$ S:\tau\rightarrow-\frac{1}{\tau}, \ \ \ T:\tau\rightarrow\tau+1.$$
Let
$$ \Gamma_0(2)=\left\{\left.\left(\begin{array}{cc}
a&b\\
c&d
\end{array}\right)\in SL_2(\mathbb{Z})\right|c\equiv0\ \ (\rm mod \ \ 2)\right\},$$

$$ \Gamma^0(2)=\left\{\left.\left(\begin{array}{cc}
a&b\\
c&d
\end{array}\right)\in SL_2(\mathbb{Z})\right|b\equiv0\ \ (\rm mod \ \ 2)\right\}$$
be the two modular subgroups of $SL_2(\mathbb{Z})$. It is known
that the generators of $\Gamma_0(2)$ are $T,ST^2ST$, while the
generators of $\Gamma^0(2)$ are $STS,T^2STS$ (cf. \cite{Ch85}).

\begin{proposition}[Modularity] (i) If the first $G$-invariant Pontryagin class $p_1^G(TM)=0$, then $\varphi_W^c(M,\tau)$ is a modular form of weight $2k$ over $SL_2(\mathbb{Z})$. \newline
 (ii) $\varphi_1^c(M,\tau)$ is a modular form weight $2k$ over $\Gamma _{0}(2)$, while $\varphi_2^c(M,\tau)$ is a modular form weight $2k$ over $\Gamma ^{0}(2)$. Moreover, the following identity holds,
\be 
\varphi_1^c\left(M,-\frac{1}{\tau }\right)=(2\tau)^{2k}\varphi_2^c(M,\tau).
\ee

\end{proposition}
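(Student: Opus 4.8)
The plan is to reduce the modularity of these \emph{averaged} genera to the classical modularity of the corresponding characteristic forms, using the fact (quoted above from \cite{HW}) that $\hat A^c(M;\bullet)$ and $\hat L^c(M;\bullet)$ are independent of the cutoff function $c$ and of the connections, and are given by integrating a de Rham form against $c$. Concretely, working with a $G$-invariant connection $\nabla^{TM}$ and writing the Chern roots of $T_\CC M$ formally as $\pm 2\pi\sqrt{-1}\,x_j$, $j=1,\dots,2k$, the integrand $c\cdot\hat A(M)\,\mathrm{ch}(\Theta(T_\CC M,\tau))$ is, coefficient-by-coefficient in $q$, a closed compactly supported form whose cohomology class pairs with $[M]$ to give $\varphi_W^c(M,\tau)$; the same is true for $\varphi_1^c,\varphi_2^c$ with $\hat L$ and $\Theta_1,\Theta_2$. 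Thus it suffices to prove that the \emph{formal $q$-series of characteristic forms} has the stated modular transformation behavior, exactly as in the compact case, and then integrate against the (fixed) cutoff function $c$ term by term.

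First I would recall the Jacobi theta function identities. Using the Weierstrass-type product formulas, one rewrites $\hat A(M)\,\mathrm{ch}(\Theta(T_\CC M,\tau))$ as a product over the Chern roots of a ratio of theta functions: up to normalization,
\be
\hat A(M)\,\mathrm{ch}\bigl(\Theta(T_\CC M,\tau)\bigr)=\prod_{j=1}^{2k}\frac{x_j\,\theta'(0,\tau)}{\theta(x_j,\tau)},
\ee
and similarly $\hat L(M)\,\mathrm{ch}(\Theta_1(T_\CC M,\tau))=\prod_j \bigl(x_j\,\theta_1(x_j,\tau)\theta'(0,\tau)\bigr)/\bigl(\theta(x_j,\tau)\theta_1(0,\tau)\bigr)$ and $\hat A(M)\,\mathrm{ch}(\Theta_2(T_\CC M,\tau))=\prod_j \bigl(x_j\,\theta_2(x_j,\tau)\theta'(0,\tau)\bigr)/\bigl(\theta(x_j,\tau)\theta_2(0,\tau)\bigr)$, in the standard notation for the four Jacobi theta functions (this is the Landweber--Stong/Ochanine presentation, cf. \cite{Liu96}). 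The transformation laws $\theta(x,-1/\tau)=\tfrac1{\sqrt{-1}}\sqrt{\tfrac{\tau}{\sqrt{-1}}}e^{\pi\sqrt{-1}\tau x^2}\theta(\tau x,\tau)$ and the analogous ones for $\theta_1,\theta_2,\theta_3$ (with $\theta_1\leftrightarrow\theta_2$ under $S$, $\theta_1\leftrightarrow\theta_1$, $\theta_2\leftrightarrow\theta_3$ under $T$, etc.) are then applied root-by-root. The crucial point is that the product of the exponential prefactors $\prod_j e^{\pi\sqrt{-1}\tau x_j^2}$ assembles into $\exp\bigl(\pi\sqrt{-1}\tau\,\sum_j x_j^2\bigr)$, and $\sum_j x_j^2$ is (a multiple of) $p_1^G(TM)$ in $H^4_G$-de Rham cohomology with compact supports; by hypothesis (i) this class vanishes, so the prefactor disappears and only the weight factor $(\tau/\sqrt{-1})^{2k}$ (one power of $\tau$ per Chern-root pair, $2k$ pairs) survives. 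That gives $\varphi_W^c(M,-1/\tau)=\tau^{2k}\varphi_W^c(M,\tau)$; invariance under $T$ is immediate since $q\mapsto q$ and $\theta(x,\tau+1)$ differs from $\theta(x,\tau)$ only by a root of unity that cancels in the ratio. Since $S$ and $T$ generate $SL_2(\mathbb Z)$, part (i) follows. For part (ii) the same computation with $\Theta_1,\Theta_2$ shows $\varphi_1^c$ transforms correctly under the generators $T$ and $ST^2ST$ of $\Gamma_0(2)$ and $\varphi_2^c$ under $STS, T^2STS$ of $\Gamma^0(2)$ — here no $p_1$ hypothesis is needed because the extra $\theta_1$ or $\theta_2$ factors in numerator and the $\theta_1(0,\tau)$ or $\theta_2(0,\tau)$ in denominator contribute an opposite Gaussian prefactor that cancels $\prod_j e^{\pi\sqrt{-1}\tau x_j^2}$ identically (this is the classical mechanism behind the Landweber--Stong genera landing in $M_*(\Gamma_0(2))$). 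Finally, the identity $\varphi_1^c(M,-1/\tau)=(2\tau)^{2k}\varphi_2^c(M,\tau)$ is read off from $\theta_1(x,-1/\tau)=\sqrt{\tfrac{\tau}{\sqrt{-1}}}\,e^{\pi\sqrt{-1}\tau x^2}\,\theta_2(\tau x,\tau)$ together with $\theta_1(0,-1/\tau)=\sqrt{\tfrac{\tau}{\sqrt{-1}}}\theta_2(0,\tau)$ and the $S$-law for $\theta,\theta'$, the factor $2^{2k}$ coming from the standard normalization mismatch between $\hat L$ and $\hat A$; again the Gaussians cancel without any curvature hypothesis.

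The one genuinely new point over the compact case — and the step I expect to require the most care — is the bookkeeping of the averaging: one must check that ``integrate the closed form against the cutoff $c$'' commutes with the theta-function manipulations, i.e. that $\tau\mapsto\varphi_W^c(M,\tau)$ really is obtained by applying $\int_M c\cdot(-)$ to a $q$-series of \emph{closed compactly supported} forms whose de Rham classes transform as above. This is where the invariance statement of \cite{HW} does the work: since $\hat A^c(M;E)$ depends only on the $K$-theory class of $E$ (and not on $c$ or the connection), each Fourier coefficient $\hat A^c(M;W_i(T_\CC M))$ is a well-defined number computed by pairing a cohomology-with-compact-supports class against $[M]$, and the characteristic-class identities among the $\Theta$'s are identities of such classes. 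Thus the modular transformation, being an identity of cohomology classes (once $p_1^G=0$ is imposed in case (i)), descends to the numbers after pairing with $c$. A secondary technical check is that $p_1^G(TM)=0$ as a class in $G$-equivariant de Rham cohomology with compact supports indeed kills the prefactor $\exp(\pi\sqrt{-1}\tau\sum_j x_j^2)$ inside the integral $\int_M c\cdot(\cdots)$ — this follows because that prefactor is $\exp$ of a $d$-exact form times the (closed) rest, and $c\cdot\hat A(M)\mathrm{ch}(\cdots)$ is compactly supported, so the cross terms integrate to zero by Stokes. Assembling these, the proposition follows.
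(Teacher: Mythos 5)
Your proposal follows essentially the same route as the paper: rewrite each genus as $\int_M c\cdot(\cdot)$ of a Chern--root/theta-function product, apply the $S$- and $T$-transformation laws, and observe that the Gaussian prefactor $\prod_j e^{\pi\sqrt{-1}\tau x_j^2}$ either cancels between numerator and denominator (elliptic genera) or is killed by $p_1^G(TM)=0$ (Witten genus). The only point to sharpen is your appeal to ``Stokes'': $c\cdot d\beta$ is not exact, so the vanishing of the cross term is not plain Stokes but the fact that $\int_M c\cdot d\beta=0$ whenever $\beta$ is \emph{$G$-invariant} (guaranteed here because $p_1^G$ is a class in $G$-invariant de Rham cohomology, so it admits a $G$-invariant primitive) combined with the cutoff property of $c$ --- which is exactly the mechanism the paper uses and which your citation of the \cite{HW} invariance correctly supplies.
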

\begin{proof} 

The proof here essentially follows \cite{Liu95cmp} with performing the modular transformations on the level of forms and taking care of the $G$-invariance and the cutoff function. 

Recall that the four Jacobi theta-functions (c.f. \cite{Ch85}) defined by
infinite multiplications are

\be \theta(v,\tau)=2q^{1/8}\sin(\pi v)\prod_{j=1}^\infty[(1-q^j)(1-e^{2\pi \sqrt{-1}v}q^j)(1-e^{-2\pi
\sqrt{-1}v}q^j)], \ee
\be \theta_1(v,\tau)=2q^{1/8}\cos(\pi v)\prod_{j=1}^\infty[(1-q^j)(1+e^{2\pi \sqrt{-1}v}q^j)(1+e^{-2\pi
\sqrt{-1}v}q^j)], \ee
 \be \theta_2(v,\tau)=\prod_{j=1}^\infty[(1-q^j)(1-e^{2\pi \sqrt{-1}v}q^{j-1/2})(1-e^{-2\pi
\sqrt{-1}v}q^{j-1/2})], \ee
\be \theta_3(v,\tau)=\prod_{j=1}^\infty[(1-q^j)(1+e^{2\pi
\sqrt{-1}v}q^{j-1/2})(1+e^{-2\pi \sqrt{-1}v}q^{j-1/2})], \ee where
$q=e^{2\pi \sqrt{-1}\tau}, \tau\in \mathbb{H}$.

They are all holomorphic functions for $(v,\tau)\in \mathbb{C \times
H}$, where $\mathbb{C}$ is the complex plane and $\mathbb{H}$ is the
upper half plane.

One can express the Witten genus and the elliptic genera by using the theta functions and curvature as follows (cf. \cite{Liu95cmp}, \cite{CH} )

\be \varphi_W^c(M,\tau) =
\displaystyle\int_M c\cdot\mathrm{det}^{1\over
2}\left(\frac{R^{TM}}{4{\pi}^2}\frac{\theta'(0,\tau)}{\theta(\frac{R^{TM}}{4{\pi}^2},\tau)}
\right), \ee

\be \varphi_1^c(M,\tau) =2^{2k}\int_M c\cdot\mathrm{det}^{1\over
2}\left(\frac{R^{TM}}{4{\pi}^2}\frac{\theta'(0,\tau)}{\theta(\frac{R^{TM}}{4{\pi}^2},\tau)}
\frac{\theta_{1}(\frac{R^{TM}}{4{\pi}^2},\tau)}{\theta_{1}(0,\tau)}\right),
\ee

\be \varphi_2^c(M,\tau) =\int_M c\cdot\mathrm{det}^{1\over
2}\left(\frac{R^{TM}}{4{\pi}^2}\frac{\theta'(0,\tau)}{\theta(\frac{R^{TM}}{4{\pi}^2},\tau)}
\frac{\theta_{2}(\frac{R^{TM}}{4{\pi}^2},\tau)}{\theta_{2}(0,\tau)}\right).
\ee

The theta functions satisfy the the following
transformation laws (cf. \cite{Ch85}), 
\be 
\theta(v,\tau+1)=e^{\pi \sqrt{-1}\over 4}\theta(v,\tau),\ \ \
\theta\left(v,-{1}/{\tau}\right)={1\over\sqrt{-1}}\left({\tau\over
\sqrt{-1}}\right)^{1/2} e^{\pi\sqrt{-1}\tau v^2}\theta\left(\tau
v,\tau\right)\ ;\ee 
\be \theta_1(v,\tau+1)=e^{\pi \sqrt{-1}\over
4}\theta_1(v,\tau),\ \ \
\theta_1\left(v,-{1}/{\tau}\right)=\left({\tau\over
\sqrt{-1}}\right)^{1/2} e^{\pi\sqrt{-1}\tau v^2}\theta_2(\tau
v,\tau)\ ;\ee 
\be\theta_2(v,\tau+1)=\theta_3(v,\tau),\ \ \
\theta_2\left(v,-{1}/{\tau}\right)=\left({\tau\over
\sqrt{-1}}\right)^{1/2} e^{\pi\sqrt{-1}\tau v^2}\theta_1(\tau
v,\tau)\ ;\ee 
\be\theta_3(v,\tau+1)=\theta_2(v,\tau),\ \ \
\theta_3\left(v,-{1}/{\tau}\right)=\left({\tau\over
\sqrt{-1}}\right)^{1/2} e^{\pi\sqrt{-1}\tau v^2}\theta_3(\tau
v,\tau)\ .\ee

By applying the Chern root algorithm on the level of forms (over certain ring extension $\CC[\wedge^2_xT^*M]\subset R'$ for each $x\in M$, cf. \cite{Hu} for details) and the transformation laws of the theta functions, we have 
\be \varphi_W^c(M,-1/\tau) =\tau^{2k}\int_M c \cdot e^{\tr\left(\frac{1}{4\pi^2}(R^{TM})^2\right)} \mathrm{det}^{1\over
2}\left(\frac{R^{TM}}{4{\pi}^2}\frac{\theta'(0,\tau)}{\theta(\frac{R^{TM}}{4{\pi}^2},\tau)}
\right).\ee

If $p_1^G(TM)=0$, then $\tr\left(\frac{1}{4\pi^2}(R^{TM})^2\right)=d\omega$ for some $G$-invariant form $\omega$. Then 
\be \label{transformation}
\begin{split}
&\tau^{2k}\int_M c \cdot e^{\tr\left(\frac{1}{4\pi^2}(R^{TM})^2\right)} \mathrm{det}^{1\over
2}\left(\frac{R^{TM}}{4{\pi}^2}\frac{\theta'(0,\tau)}{\theta(\frac{R^{TM}}{4{\pi}^2},\tau)}
\right)\\
=&\tau^{2k}\int_M c\cdot  \mathrm{det}^{1\over
2}\left(\frac{R^{TM}}{4{\pi}^2}\frac{\theta'(0,\tau)}{\theta(\frac{R^{TM}}{4{\pi}^2},\tau)}\right)\\
&+ \tau^{2k} \int_M c\cdot d\left(\omega\cdot\frac{e^{\tr\left(\frac{1}{4\pi^2}(R^{TM})^2\right)-1}}{\tr\left(\frac{1}{4\pi^2}(R^{TM})^2\right)} \mathrm{det}^{1\over
2}\left(\frac{R^{TM}}{4{\pi}^2}\frac{\theta'(0,\tau)}{\theta(\frac{R^{TM}}{4{\pi}^2},\tau)}
\right)  \right).\\
\end{split}
\ee
However since $\omega\cdot \frac{e^{\tr\left(\frac{1}{4\pi^2}(R^{TM})^2\right)-1}}{\tr\left(\frac{1}{4\pi^2}(R^{TM})^2\right)} \mathrm{det}^{1\over
2}\left(\frac{R^{TM}}{4{\pi}^2}\frac{\theta'(0,\tau)}{\theta(\frac{R^{TM}}{4{\pi}^2},\tau)}\right)$ is $G$-invariant and $c$ is the cutoff function, we see that the second term in the above formula is 0. So we have 
\be \varphi_W^c(M,-1/\tau)=\tau^{2k} \varphi_W^c(M,\tau).  \ee
One also easily verifies that
\be \varphi_W^c(M, \tau+1)=\varphi_W^c(M,\tau).   \ee
Therefore (i) follows. 

Similarly, one can show that 
\be \varphi_1^c(M,-1/\tau) =(2\tau)^{2k}\varphi_2^c(M,\tau), \ \ \  \varphi_1^c(M,\tau+1)=\varphi_1^c(M,\tau). \ee
Since the generators of $\Gamma_0(2)$ are $T,ST^2ST$, while the
generators of $\Gamma^0(2)$ are $STS,T^2STS$, the modularity in (ii)  follows. 

\end{proof}

\begin{remark} In the above proposition, we use the Chern-Weil definition of $G$-invariant the Pontryagin class $p_1^G(TM)$. The total $G$-invariant Pontryagin form of a $G$-equivariant vector bundle $E$ with $G$-invariant connection $\nabla^E$ is defined as (cf. \cite{Z01})
\be p^G(E, \nabla^E)= \mathrm{det}^{1\over
2}\left(I-\left(\frac{R^E}{2\pi}\right)^2
\right). \ee
Splitting by degrees, one has
\be p^G(E, \nabla^E)=1+p_1^G(E, \nabla^E)+p_2^G(E, \nabla^E)+\cdots p_k^G(E, \nabla^E),\ee
such that $p_i^G(E, \nabla^E)\in \Omega^{4i}(M)$. The $G$-invariant Pontryagin classes $p_i^G(E)$ are the $G$-invariant cohomology classes represented by $p_i^G(E, \nabla^E)$ in the $G$-invariant de Rham cohomology $H^i(M)^G.$ Explicitly, $p_1^G(E, \nabla^E)=\tr\left(\frac{1}{4\pi^2}(R^{E})^2\right).$

\end{remark}

Following \cite{Liu95cmp},  there exists a {\it miraculous cancellation formula} on the form level,
\be \{\hat L(M)\}^{(4k)}=\sum_{j=0}^{[\frac{k}{2}]} 2^{3k-6[\frac{k}{2}]} \{\hat A(M)\mathrm{ch}(h_j(T_\CC M))\}^{(4k)},\ee 
where each $h_j(T_\CC M)\in K^0_G(M), 0\leq j\leq [\frac{k}{2}]$ is an integral linear combination of $B_i(T_\CC M)$, $0\leq i\leq j$, generalizing the celebrated Alvarez-Gaum\'e-Witten miraculous cancellation formula \cite{AGW} in dimension 12. Suppose $M$ is $G$-Spin. Multiplying $c$ on both sides, integrating over $M$ and applying Wang's index formula \cite{HW}, we get the {\it miraculous cancellation formula for proper actions}
\be\label{eq:miraculous}  \tau_*(\ind_G(\mathcal{B}_M))=\sum_{j=0}^{[\frac{k}{2}]}2^{3k-6[\frac{k}{2}]} \tau_*(\ind_G(\dirac_M^{h_j(T_\CC M)})),  \ee
where $\mathcal{B}_M$ is the signature operator.

\subsection{Witten genus and elliptic genera for proper actions: vanishing \& rigidity  }

$\, $

In this subsection, we introduce the equivariant Witten genus and the equivariant elliptic genera. We will always assume that {\it $G$ is an almost connected Lie group that  acts properly and cocompactly on a $4k$ dimensional manifold $M$ with a $G$-equivariant Spin structure.}

Define the {\it $G$-equivariant Witten genus} by
\be
\varphi_{W,G}(M,\tau) = \ind_G(\dirac_M \bigotimes\Theta(T_\CC M, \tau))\in K_0(C^*_r(G))[[q]].
\ee
That is, 
\be
\varphi_{W,G}(M,\tau) = \ind_G(\dirac_M) +  \ind_G(\dirac_M \otimes W_1(T_\CC M)) q + \cdots.
\ee
Then $\tau_*\left(\varphi_{W,G}(M)\right) = \varphi_{W}^c(M),$ where $\tau : C^*_r(G) \to \mathbb C$ denotes 
the von Neumann trace as above.

Define the {\it $G$-equivariant elliptic genera} as
\be 
\varphi_{1,G}(M,\tau) = \ind_G(\mathcal{B}_M \bigotimes\Theta_1(T_\CC M, \tau))\in K_0(C^*_r(G))[[q^{1\over2}]],
\ee
\be 
\varphi_{2,G}(M,\tau) = \ind_G(\dirac_M \bigotimes\Theta_2(T_\CC M, \tau))\in K_0(C^*_r(G))[[q^{1\over2}]].
\ee
That is, 
\be 
\varphi_{1,G}(M,\tau) =\ind_G(\mathcal{B}_M) +  \ind_G(\mathcal{B}_M \otimes A_1(T_\CC M)) q^{1\over2}+ \cdots,  
\ee
\be 
\varphi_{2,G}(M,\tau) =\ind_G(\dirac_M) +  \ind_G(\dirac_M \otimes B_1(T_\CC M)) q^{1\over2} + \cdots.  
\ee
Then $\tau_*\left(\varphi_{i,G}(M, \tau)\right) = \varphi_{i}^c(M, \tau), i=1,2$, where $\tau : C^*_r(G) \to \mathbb C$ denotes 
the von Neumann trace as above.

For the Witten genus, we conclude the following vanishing result. 

\begin{theorem}[Vanishing]\label{vanishing} Suppose $G$ is simply connected and $G/K$ has $G$-Spin structure with $K$ being a maximal compact subgroup. If $M$ is string and the $G$-action is properly non-trivial, then the $G$-equivariant Witten genus vanishes, i.e. $\varphi_{W,G}(M, \tau) =0 \in K_0(C^*_r(G))[[q]].$ 
By taking von Neumann trace,  we see that the Witten genus vanishes, i.e.
$\varphi_{W}^c(M, \tau)= \hat{A}^c(M;\bigotimes\Theta(T_\CC M, \tau))=0 \in \RR[[q]].$\newline
\end{theorem}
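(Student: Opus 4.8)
The plan is to reduce the vanishing statement to the Atiyah--Hirzebruch-type vanishing theorem of Hochs--Mathai \cite{HM16} via a loop-space enhancement of the \emph{quantisation commutes with induction} diagram \eqref{qi}. First I would invoke Abels' theorem to write $M = G\times_K N$ for a compact $K$-manifold $N$ (compact because the action is cocompact), so that the $G$-equivariant $K$-homology class $[\dirac_M]$ is the analytic induction $K\text{-}\mathrm{Ind}_K^G$ of the $K$-equivariant class $[\dirac_N]$. The key observation is that the coefficient bundles $W_i(T_\CC M)$ are built functorially from $T_\CC M$, and under $M = G\times_K N$ one has $TM = T(G/K)\oplus TN$ as $G$-equivariant bundles (pulled back appropriately), i.e. $T_\CC M = \ppc \oplus T_\CC N$ in the relevant sense. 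Since $\Theta$ is multiplicative in Whitney sums (this follows from the identity $S_t(E\oplus F)=S_t(E)S_t(F)$ together with the normalisation $\widetilde{E\oplus F}=\widetilde E+\widetilde F$), one gets $\Theta(T_\CC M,\tau) = \Theta(\ppc,\tau)\,\widehat\otimes\,\Theta(T_\CC N,\tau)$ coefficient-wise in $q$. I would then establish the commutativity of the second diagram in the introduction, namely
\be
\DInd_{LK}^{LG}\circ \bigl(\ind_K(\bullet\otimes\Theta(T_\CC N,\tau))\bigr) = \ind_G\bigl(K\text{-}\mathrm{Ind}_K^G(\bullet)\otimes\Theta(T_\CC M,\tau)\bigr),
\ee
where $\DInd_{LK}^{LG}(\bullet) := \ind_G(\dirac_{G/K}\otimes\Theta(\ppc,\tau)\otimes\bullet)$, coefficient by coefficient in $q$. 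Each coefficient of this identity is an instance of the ordinary diagram \eqref{qi} applied to the twisted $K$-homology class $[\dirac_N]\otimes(\text{coefficient of }\Theta(T_\CC N,\tau))$, with the extra factor $\Theta(\ppc,\tau)$ absorbed into the definition of the loop Dirac induction; so the commutativity is inherited from \cite{HM16, GMW} once the splitting of $\Theta$ is in place.

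Granting the diagram, the argument concludes as follows. The hypothesis that $S^3$ is conjugate-dense in $K$, combined with $M$ being string, forces (by Dessai's observation, as recalled in the introduction via \cite{ML}) a strong constraint that makes $\Theta(\ppc,\tau)$ behave well; more precisely, the string condition ensures the $G$-equivariant version of $\lambda$ appearing in $p_1$ vanishes, so that $\DInd_{LK}^{LG}$ lands in the image of the ordinary Dirac induction pattern and, crucially, one can analyse it $K$-representation by $K$-representation. The properly nontrivial hypothesis means there is a point of $M$ whose stabiliser is not maximal compact; equivalently, after the identification $M=G\times_K N$, the $K$-action on $N$ has a stabiliser that is a proper subgroup, so in particular the $K$-action on $N$ is nontrivial. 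Then the Atiyah--Hirzebruch theorem (in its classical compact form, applied to the conjugate-dense $S^3\subset K$ acting on the string manifold $N$) shows that every coefficient $\ind_K(\dirac_N\otimes W_i(T_\CC N))$ vanishes in $R(K)$ — this is exactly Liu's vanishing theorem for the Witten genus in the compact setting, using that $S^3$-rigidity plus the string condition kills the $S^3$-equivariant Witten genus, hence (by conjugate density) the $K$-equivariant one. Pushing these vanishing classes through the left-bottom-right path of the diagram gives $\varphi_{W,G}(M,\tau) = \DInd_{LK}^{LG}(0) = 0$ in $K_\bullet(C^*_r(G))[[q]]$. Applying the von Neumann trace $\tau_*$ and using $\tau_*(\varphi_{W,G}(M,\tau)) = \varphi_W^c(M,\tau)$ yields the averaged statement $\varphi_W^c(M,\tau) = \hat A^c(M;\Theta(T_\CC M,\tau)) = 0$.

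The main obstacle I anticipate is establishing the commutativity of the loop-enhanced diagram rigorously, i.e. making sense of $\DInd_{LK}^{LG}$ as a genuine map on $R(K)[[q]]$ and checking that the twisting by $\Theta(\ppc,\tau)$ — a formal power series of \emph{virtual} $K$-representations on the fibre — interacts correctly with Dirac induction and with analytic induction $K\text{-}\mathrm{Ind}_K^G$. Two points need care: (a) the multiplicativity $\Theta(T_\CC M,\tau)=\Theta(\ppc,\tau)\widehat\otimes\Theta(T_\CC N,\tau)$ must be checked as an identity of $G$-equivariant $K$-theory classes over $M=G\times_K N$, using that the normal bundle $G\times_K \ppc$ to $N$ is the pullback of $T(G/K)$ and that $\Theta$ sends Whitney sums to (completed) tensor products; and (b) one must verify that the ordinary \emph{quantisation commutes with induction} diagram \eqref{qi} is natural enough to be tensored, coefficient-wise, with an arbitrary virtual coefficient bundle — this should follow from the $KK$-theoretic/assembly-map functoriality already used in \cite{HM16, GMW}, but it must be spelled out since the coefficient bundles $W_i$ are not $G$-equivariantly trivial. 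Once these are in place, the reduction to the compact Atiyah--Hirzebruch/Liu vanishing is essentially formal.
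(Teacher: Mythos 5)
Your proposal follows essentially the same route as the paper: decompose $T_\CC M$ into the vertical bundle plus the pullback of $T_\CC(G/K)$, use the multiplicativity of $\Theta$ to split the coefficient bundles $W_r(T_\CC M)$, push the computation through a $\Theta(\ppc,\tau)$-twisted Dirac induction (the paper's Lemma on twisted Dirac induction, which is exactly your coefficient-wise loop-enhanced diagram), and then kill each coefficient $\ind_K(\dirac_N^{W_i(T_\CC N)})$ via Liu's vanishing theorem applied to the string manifold $N$ with its $S^3$-action, using conjugate density to pass from $S^3$ to $K$. The technical points you flag (multiplicativity of $\Theta$ over $M=G\times_K N$ and naturality of the induction diagram under twisting by the $K$-representations $W_j(\ppc)$) are precisely what the paper delegates to its twisted-induction lemma, so your argument is correct and matches the paper's.
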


For the elliptic genera, we obtain the rigidity results to be stated in the following.

Consider the $q$-series:
\be \delta_1(\tau)=\frac{1}{4}+6\sum_{n=1}^{\infty}\underset{d\ odd}{\underset{d|n}{\sum}}dq^n={1\over 4}+6q+6q^2+\cdots,\ee
\be \varepsilon_1(\tau)=\frac{1}{16}+\sum_{n=1}^{\infty}\underset{d|n}{\sum}(-1)^dd^3q^n={1\over
16}-q+7q^2+\cdots,\ee
\be \delta_2(\tau)=-\frac{1}{8}-3\sum_{n=1}^{\infty}\underset{d\ odd}{\underset{d|n}{\sum}}dq^{n/2}=-{1\over 8}-3q^{1/2}-3q-\cdots,\ee
\be \varepsilon_2(\tau)=\sum_{n=1}^{\infty}\underset{n/d\ odd}{\underset{d|n}{\sum}}d^3q^{n/2}=q^{1/2}+8q+\cdots.\ee
Note that $4\delta_1(\tau), 16\varepsilon_1(\tau), 8\delta_2(\tau)$ and $\varepsilon_2(\tau)$ are all integral $q$-series.

For an integral $q$-series $a_0+a_1q^{1\over2}+\cdots+a_iq^{i\over2}+\cdots$, one can view it as an element in $R(K)[[q^{1\over2}]]$ as
$$a_0\cdot[\CC]+a_1\cdot[\CC]q^{1\over2}+\cdots+a_i\cdot[\CC]q^{i\over2}+\cdots,$$ 
where $\CC$ stands for the trivial representation of $K$.

\begin{theorem}[Rigidity] \label{rigid}  Suppose $G/K$ has $G$-Spin structure with $K$ being a maximal compact subgroup. Then in $K_0(C^*_r(G))[[q^{1\over2}]]$, $\varphi_{1,G}(M, \tau)$  is an integral linear combination of 
$$[\dirac_{G/K}\otimes(\Delta^+(\mathfrak p)\oplus\Delta^-(\mathfrak p))_\CC\otimes\Theta_1({\mathfrak p}_\CC, \tau) \otimes ((4\delta_1(\tau))^a(16\varepsilon_1(\tau))^b)];$$ 
in $K_0(C^*_r(G))[[q^{1\over2}]]$, $\varphi_{2,G}(M, \tau)$  is an integral linear combination of 
$$[\dirac_{G/K}\otimes\Theta_2({\mathfrak p}_\CC, \tau) \otimes ((8\delta_2(\tau))^a(\varepsilon_2(\tau))^b)].$$

\end{theorem}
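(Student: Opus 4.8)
The plan is to reduce the statement to the classical rigidity theorems for the \emph{compact} $K$-manifold $N$ in Abels' fibration $M\cong G\times_K N$ \cite{Abels}, and then transport the conclusion back to $M$ through the twisted-induction diagrams displayed in the Introduction. First I would record the geometry: $N$ is compact, and since $G/K$ is $G$-spin the $2$-out-of-$3$ lemma gives $N$ a $K$-spin structure; moreover $TM\cong G\times_K(TN\oplus\mathfrak p)$, so the multiplicativity of $\Lambda_t, S_t$ (hence of $\Theta_1,\Theta_2$) and of the complexified spinor modules under direct sums yields $K$-equivariant identifications over $N$,
\[
\big((\Delta^+(TM)\oplus\Delta^-(TM))_\CC\otimes\Theta_1(T_\CC M,\tau)\big)\big|_N\cong\big((\Delta^+(TN)\oplus\Delta^-(TN))_\CC\otimes\Theta_1(T_\CC N,\tau)\big)\otimes\big((\Delta^+(\mathfrak p)\oplus\Delta^-(\mathfrak p))_\CC\otimes\Theta_1({\mathfrak p}_\CC,\tau)\big),
\]
and $\Theta_2(T_\CC M,\tau)|_N\cong\Theta_2(T_\CC N,\tau)\otimes\Theta_2({\mathfrak p}_\CC,\tau)$, in which the second tensor factors are exactly the $\mathfrak p$-twists built into $\Da_{LK}^{LG}$ and $\Db_{LK}^{LG}$.

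Next I would establish the commutativity of the two diagrams with vertical arrows $\Da_{LK}^{LG}$ and $\Db_{LK}^{LG}$. The point is that each Fourier coefficient of $\Theta_1,\Theta_2$ is a genuine finite-rank equivariant bundle, that by the previous paragraph the corresponding twisted signature/Dirac class on $M$ is $\KInd_K^G$ of the analogous class on $N$ tensored with a constant $K$-representation, and that feeding this through the ordinary ``quantisation commutes with induction'' square \eqref{qi} of \cite{HM16,GMW} (twisted by that constant representation) gives, coefficient by coefficient in $q^{1/2}$,
\[
\varphi_{1,G}(M,\tau)=\Da_{LK}^{LG}\big(\varphi_{1,K}(N,\tau)\big),\qquad \varphi_{2,G}(M,\tau)=\Db_{LK}^{LG}\big(\varphi_{2,K}(N,\tau)\big),
\]
where $\varphi_{1,K}(N,\tau)=\ind_K(\mathcal B_N\otimes\Theta_1(T_\CC N,\tau))$ and $\varphi_{2,K}(N,\tau)=\ind_K(\dirac_N\otimes\Theta_2(T_\CC N,\tau))$ in $R(K)[[q^{1/2}]]$ are the usual $K$-equivariant elliptic genera of the compact spin $K$-manifold $N$.

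Then I would invoke the classical results on $N$. By the Witten--Bott--Taubes--Liu rigidity theorem \cite{BT89,Liu96} (the general compact $K$ case reducing to circle subgroups), each $q$-coefficient of $\varphi_{1,K}(N,\tau)$ and of $\varphi_{2,K}(N,\tau)$ is a constant virtual character of $K$, hence an integer multiple of the trivial representation; thus $\varphi_{i,K}(N,\tau)=f_i(\tau)[\CC]$ with $f_i\in\ZZ[[q^{1/2}]]$, and by Landweber--Stong--Ochanine \cite{LS88,O87} $f_1$ (resp. $f_2$) is a modular form of weight $\tfrac12\dim N$ over $\Gamma_0(2)$ (resp. $\Gamma^0(2)$). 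The graded ring of modular forms over $\Gamma_0(2)$ is the polynomial ring $\CC[\delta_1,\varepsilon_1]$ with $\delta_1$ of weight $2$ and $\varepsilon_1$ of weight $4$ (and $\CC[\delta_2,\varepsilon_2]$ over $\Gamma^0(2)$), and by the classical integrality of the elliptic genus of a spin manifold the integral forms $f_1,f_2$ lie in the subrings $\ZZ[4\delta_1,16\varepsilon_1]$, resp. $\ZZ[8\delta_2,\varepsilon_2]$ — this is the origin of the normalisation constants $4,16,8,1$. Writing the resulting finite (possibly empty) expansions $f_1=\sum_{2a+4b=\frac12\dim N}c_{ab}(4\delta_1)^a(16\varepsilon_1)^b$ and $f_2=\sum c'_{ab}(8\delta_2)^a(\varepsilon_2)^b$ with $c_{ab},c'_{ab}\in\ZZ$, and using that $\Da_{LK}^{LG},\Db_{LK}^{LG}\colon R(K)[[q^{1/2}]]\to K_\bullet(C^*_r(G))[[q^{1/2}]]$ are additive and $\ZZ[[q^{1/2}]]$-linear with $\Da_{LK}^{LG}([\CC])=[\dirac_{G/K}\otimes(\Delta^+(\mathfrak p)\oplus\Delta^-(\mathfrak p))_\CC\otimes\Theta_1({\mathfrak p}_\CC,\tau)]$ and $\Db_{LK}^{LG}([\CC])=[\dirac_{G/K}\otimes\Theta_2({\mathfrak p}_\CC,\tau)]$, the two equalities above yield precisely the integral linear combinations asserted.

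The hard part is the second step: constructing $\Da_{LK}^{LG},\Db_{LK}^{LG}$ as well-defined maps $R(K)[[q^{1/2}]]\to K_\bullet(C^*_r(G))[[q^{1/2}]]$ and proving that the two squares commute — this is the ``quantisation commutes with twisted induction'' mechanism singled out in the Introduction as the new technical input. One must verify that the analytic induction $\KInd_K^G$ carries the restriction-to-$N$ tensor factorisations to the insertion of the $\mathfrak p$-twists uniformly in $q$, and that the resulting formal power series converge $q$-adically to honest elements of $K_\bullet(C^*_r(G))[[q^{1/2}]]$. Once that machinery is in place, the first and third steps amount to the standard compact-manifold rigidity and integrality facts together with bookkeeping of the weights.
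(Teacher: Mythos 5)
Your proposal follows essentially the same route as the paper: factorise $\Theta_1,\Theta_2$ along the Abels fibration $M=G\times_K N$, push the computation down to the compact fibre via twisted Dirac induction (the paper's Lemma~\ref{twisted}, applied coefficientwise in $q^{1/2}$ exactly as in the vanishing proof), apply Witten--Bott--Taubes--Liu rigidity to conclude $\varphi_{i,K}(N,\tau)=f_i(\tau)\cdot[\CC]$, and then expand $f_i$ integrally in the normalised generators before applying the $\ZZ[[q^{1/2}]]$-linear induction map. The only minor variation is in how the $\ZZ[4\delta_1(\tau),16\varepsilon_1(\tau)]$ expansion of $f_1$ is justified: the paper derives it from the expansion of $f_2$ in $M(\Gamma^0(2))=\ZZ[8\delta_2(\tau),\varepsilon_2(\tau)]$ via the $S$-transformation laws of Lemma~\ref{modular}, whereas you assert the $\Gamma_0(2)$ integrality directly.
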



\section{Loop Dirac induction: proof of vanishing }

In this section, we give the proof of the vanishing theorem. 

We will first need a technical lemma. Let $\eta: K\to SO(E)$ be a representation of $K$ on $E$. Complexifying $E$ gives a Hermitian metric on $E_\CC$ and an induced representation $\eta: K\to U(E_\CC)$. Using $\eta$, one can construct an Hermitian bundle $E_\eta=G\times_{K, \eta}E_\CC$ over $G/K$. 
 
Define the $(E, \eta)$-twisted Dirac induction  
\be   \DInd_{K,\, E}^G:\, R(K)\to K_0(C^*_rG)\ee
by twisting the value of the Dirac induction map to be  $\ind_G (\dirac_{G/K} \otimes E_\eta\otimes \bullet) \in K_0(C^*_r(G)).$

Let $\pi: M\to G/K$ be the projection. Let $V$ be a $K$-equivariant bundle over $N$. Then it is not hard to see that one can patch the $K$-equivariant bundle $V$ to be a $G$-equivariant bundle over $M$, which we denote by $\mathcal{V}$. 
\begin{lemma} \label{twisted} The following identity holds,
\be \DInd_{K, \, E}^G(\ind_K([\dirac_N^V]) =\ind_G([\dirac_M^{\mathcal{V}\otimes \pi^*E_\eta}]).   \ee
\end{lemma}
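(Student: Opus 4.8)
The plan is to prove the identity by reducing it to the original \emph{quantisation commutes with induction} diagram \eqref{qi}, applied not to $N$ itself but to the twisted Dirac operator on $N$ obtained by tensoring with $E_\CC$. The key observation is that the $(E,\eta)$-twisted Dirac induction $\DInd_{K,E}^G$ differs from the ordinary Dirac induction $\DInd_K^G$ only by precomposing (in $R(K)$) with the operation $W \mapsto W \otimes E_\CC$, or equivalently that $\DInd_{K,E}^G(\rho) = \DInd_K^G(\rho \otimes E_\CC)$ for any $\rho \in R(K)$; this follows from the projection-formula-type identity $\dirac_{G/K} \otimes (V_\rho \otimes E_\eta) \simeq \dirac_{G/K}\otimes V_{\rho \otimes E_\CC}$, since $V_\rho \otimes E_\eta = G\times_K(V \otimes E_\CC) = V_{\rho\otimes E_\CC}$ as $G$-vector bundles over $G/K$.

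First I would establish the compatibility of analytic induction $K\text{-Ind}_K^G$ with twisting: if $V$ is a $K$-equivariant bundle over $N$ and $\mathcal V = $ its patched $G$-equivariant bundle over $M = G\times_K N$, and similarly $\pi^*E_\eta$ is the pullback of $E_\eta$ to $M$, then in $K_\bullet^G(M)$ one has
\be
K\text{-Ind}_K^G\bigl([\dirac_N^{V\otimes E_\CC}]\bigr) = [\dirac_M^{\mathcal V \otimes \pi^*E_\eta}].
\ee
This is essentially bookkeeping: the analytic induction map sends the class of a $K$-equivariant Dirac-type operator on $N$ to the class of the corresponding $G$-equivariant operator on $M$, and under the identification $M = G\times_K N$ the bundle $\mathcal V \otimes \pi^* E_\eta$ restricts on each fibre $N$ to $V \otimes E_\CC$ (the restriction of $\pi^*E_\eta$ to a fibre is the trivial-as-a-bundle but $K$-equivariantly nontrivial bundle $N\times E_\CC$ with $K$ acting via $\eta$). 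One should check this is compatible with the Poincaré duality identifications $K^0_G \cong K_0^G$ recalled in Section~1, so that it indeed holds at the level of K-homology classes.

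Then I would chase diagram \eqref{qi} with the element $[\dirac_N^{V\otimes E_\CC}] \in K_\bullet^K(N)$, or rather its image $\ind_K([\dirac_N^{V\otimes E_\CC}]) = \ind_K([\dirac_N^V] \otimes E_\CC) \in R(K)$. Going up-then-right in \eqref{qi} gives $\ind_G\bigl(K\text{-Ind}_K^G[\dirac_N^{V\otimes E_\CC}]\bigr) = \ind_G([\dirac_M^{\mathcal V\otimes\pi^*E_\eta}])$ by the previous paragraph; going right-then-up gives $\DInd_K^G\bigl(\ind_K([\dirac_N^V])\otimes E_\CC\bigr) = \DInd_{K,E}^G\bigl(\ind_K([\dirac_N^V])\bigr)$ by the first observation. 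Commutativity of \eqref{qi} then yields exactly the claimed identity.

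The main obstacle I anticipate is the precise verification that $K\text{-Ind}_K^G$ intertwines the two twisting operations at the level of K-homology — that is, making rigorous the claim that patching $V$ to $\mathcal V$ and then tensoring with $\pi^*E_\eta$ on $M$ corresponds, under analytic induction, to first tensoring $V$ with $E_\CC$ on $N$. One has to be careful that $\pi^* E_\eta$ is genuinely the $G$-bundle on $M$ that one gets by patching the $K$-bundle $N\times E_\CC$ (with $\eta$-action), i.e. that $\pi^*(G\times_K E_\CC) = G\times_K(N\times E_\CC)$ over $G\times_K N$, which is a straightforward associated-bundle computation but must be stated cleanly. Everything else is a formal diagram chase using results already in the excerpt (the diagram \eqref{qi} from \cite{HM16, GMW}, the definition of $\DInd_K^G$, and the Poincaré duality of Section~1).
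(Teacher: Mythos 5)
Your proposal is correct and follows essentially the same route as the paper: the paper's own proof is a one-line appeal to the quantisation-commutes-with-induction results of \cite{GMW} (Proposition 22 and Remark 20) together with Poincar\'e duality from Section 1, and your argument is just the explicit unwinding of that citation — identifying $\DInd_{K,E}^G(\rho)=\DInd_K^G(\rho\otimes E_\CC)$, checking $K\text{-Ind}_K^G[\dirac_N^{V\otimes E_\CC}]=[\dirac_M^{\mathcal V\otimes\pi^*E_\eta}]$, and chasing diagram \eqref{qi}.
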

\begin{proof} Since $M$ is a $G$-Spin manifold, every element of $K_0^G(M)$ is represented by a twisted $G$-equivariant Dirac operator, see Section 1.
The Lemma is a consequence of Section 2 as well as Proposition 22 and Remark 20 in \cite{GMW}.
\end{proof}

Now we are ready to give the proof of the vanishing theorem.

\begin{proof}[Proof of Theorem \ref{vanishing}]

Note that after choosing a connection on the fiber bundle $M\to G/K$, we have the following decomposition
\be T_\CC M\cong T^v_\CC M\oplus \pi^*(T_\CC (G/K)), \ee
where $T^vM$ is the vertical tangent bundle. 

As the Witten bundle $\Theta$ is multiplicative and functorial, so we have
\be \Theta(T_\CC M, \tau)\cong \Theta(T^v_\CC M, \tau)\otimes \Theta(\pi^*(T_\CC (G/K)), \tau)\cong \Theta(T^v_\CC M, \tau)\otimes \pi^*(\Theta(T_\CC (G/K), \tau)).\ee
Therefore
\be  W_r(T_\CC M)\cong \oplus_{i+j=r}W_i(T^v_\CC M)\otimes \pi^*W_j(T_\CC(G/K)).    \ee

It is not hard to see that $T_\CC (G/K)$ is obtained by the adjoint representation $\rho: K\to U(\ppc)$. This $\rho$ in turn induces a series representations 
\be  \rho_{i}: K\to U(W_i(\ppc)), i=0, 1, \cdots,\ee
each corresponding to  the virtual bundle $W_i(T_\CC (G/K))$ over $G/K$.

By Lemma \ref{twisted}, we have
\be
\begin{split}
&\ind_G(\dirac_M^{W_r(T_\CC M)})\\
=&\ind_G(\dirac_M^{\oplus_{i+j=r}W_i(T^v_\CC M)\otimes \pi^*W_j(T_\CC(G/K)))} \\
=&\sum_{i+j=r}  \ind_G(\dirac_M^{W_i(T^v_\CC M)\otimes \pi^*W_j(T_\CC(G/K)})\\
=&\sum_{i+j=r} \DInd_{K,\, W_j(\ppc)}^G(\ind_K(\dirac_N^{W_i(T_\CC N)})).
\end{split}
\ee

Assembled into $q$-series, one has
$$\sum_{r=0}^\infty\ind_G(\dirac_M^{W_r(T_\CC M)})q^{r}=\sum_{r=0}^\infty( \sum_{i+j=r} \DInd_{K,\,W_j(\ppc)}^G(\ind_K(\dirac_N^{W_i(T_\CC N)})))q^{r}.$$

Now $K$ acts notrivially on $N$, and $K$ is covered by conjugate classes of maximal torus in it, maximal tori in $K$ act nontrivially on $N$. Let $T$ be a maximal torus. Since circles are dense in $T$, there exists a $S^1$ in $T$ acting nontrivially on $N$. 
If the $S^1$-action has no fixed points, then equivariant Witten genus of $N$ vanishes by the Atiyah-Bott-Segal-Singer-Lefschetz fixed point formula. Suppose that there are some fixed points. Let $EK$ be the universal $K$-principal bundle over the classifying space $BK$ of any topological group $K$. We have  the Borel fibre bundle
\[
N\stackrel{i}{\rightarrow} N\times_{K}EK\stackrel{\pi}{\rightarrow} BK.
\]
By our assupmtion $K$ is simply connected. As $K$ is the deformation retraction of $G$, $K$ is also simply connected and hence $BK$ is $3$-connected and $H^4(BK, \ZZ)\cong \ZZ$. We therefore see that there exists a commutative diagram
\begin{gather*}
\begin{aligned}
\xymatrix{
0\ar[r]  &   H^4(BK) \ar[d]  \ar[r]^{\pi^\ast \ \ \ \ \ \ \ }   & H^4(N\times_{K}EK) \ar[r]^{   \ \ \    \ \ \  \ \ \  \ \ \ i^\ast}  \ar[d] &H^4(N)    \ar@{=}[d]  \\
  & H^4(BS^1) \ar[r]^{\pi^\ast\ \ \ }  & H^4(N\times_{S^1}ES^1) \ar[r]^{   \ \ \  \ \ \    \ \ \  i^\ast} & H^4(N), \\
  }
\end{aligned}
\end{gather*}
such that the first row is exact, and maps to the second row by restricting the action to $S^1$.

When restricted to one slice, we have
\be TM|_N=TN\oplus \pp_N,\ee
where $\pp_N$ is the trivial vector bundle over $N$ with fiber $\pp$. Since $TM$ is string, $TN$ is also string. Therefore $p_1(TN)=0.$ Hence by the exactness of the first row, one has $p_1(TN)_{K}=n\cdot \pi^\ast q,$ where $q\in H^4(BK, \ZZ)\cong \ZZ$ is the generator. Restricting to the second row, we get $p_1(TN)_{S^1}=n\cdot \pi^\ast u^2,$ where $u^2$ is the generator of $H^4(BS^1, \ZZ)$. By Theorem 6 in \cite{Liu95}, we have the $S^1$-equivariant Witten genus $\sum_{i=0}^\infty\ind_{S^1}(\dirac_N^{W_i(T_\CC N)})q^i=0$, in particular, we see that the Witten genus 
$\sum_{i=0}^\infty\ind(\dirac_N^{W_i(T_\CC N)})q^i=0.$ 
On the other hand, if a circle in $T$ acts trivially on $N$, then the equivariant Witten genus is a constant, i.e. the Witten genus of $N$, which is equal to 0. Hence the circle equivariant Witten genus for circles in $T$ are all 0. Since circles are dense in $T$, we see that the $T$-equivariant Witten genus $\sum_{i=0}^\infty\ind_{T}(\dirac_N^{W_i(T_\CC N)})q^i=0$ and hence the $K$-equivariant Witten genus $\sum_{i=0}^\infty\ind_{K}(\dirac_N^{W_i(T_\CC N)})q^i=0$ and consequently $$\ind_G(\dirac_M^{W_r(T_\CC M)})=0, \forall r.$$
So $$\varphi_{W,G}(M, \tau)=\sum_{r=0}^\infty\ind_G(\dirac_M^{W_r(T_\CC M)})q^{r}=0 \in K_0(C^*_r(G))[[q]].$$
\end{proof}


\section{Loop Dirac induction: proof of rigidity} \label{sec:loop}


In this section, we study the rigidity of the equivariant elliptic genera. To understand the statement of Theorem \ref{rigid} more clearly, let us look at the $q$-series with coefficients in $K_0(C^*_r(G))$:
\be F_1(\tau):=[\dirac_{G/K}\otimes(\Delta^+(\mathfrak p)\oplus\Delta^-(\mathfrak p))_\CC\otimes\Theta_1({\mathfrak p}_\CC, \tau)]\in K_0(C^*_r(G),\ee
\be F_2(\tau):=[\dirac_{G/K}\otimes\Theta_2({\mathfrak p}_\CC, \tau)]\in K_0(C^*_r(G),\ee
which appear in Theorem \ref{rigid}. Explicitly, the first several terms are
\be
\begin{split} &\Theta_1({\mathfrak p}_\CC, \tau)\\
=&\bigotimes_{n=1}^\infty
S_{q^n}(\ppc)\Lambda_{-q^n}(\CC^{d})\otimes \bigotimes_{m=1}^\infty \Lambda_{q^m}(\ppc
)S_{-q^m}(\CC^d)\\
=&(\CC+\ppc q+S^2\ppc q^2+\cdots)(\CC+\ppc q^2+\cdots)(\CC-\CC^{d}q+\Lambda^2\CC^{d}q^2+\cdots)(\CC-\CC^{d}q^2+\cdots)\\
&(\CC+\ppc q+\Lambda^2\ppc q^2+\cdots)(\CC+\ppc q^2+\cdots)(\CC-\CC^{d}q+S^2\CC^{d}q^2+\cdots)(\CC-\CC^{d}q^2+\cdots)\\
=&(\CC+2\ppc q+(\ppc\otimes \ppc+S^2\ppc+\Lambda^2\ppc
M)q^2+\cdots)(\CC+2\ppc q^2+\cdots)\\
&(\CC-2\CC^{d}q+(\CC^{d}\otimes \CC^{d}+S^2\CC^{d}
+\Lambda^2\CC^{d})q^2+\cdots)(\CC-2\CC^{d}q^2+\cdots)\\
=&(\CC+2\ppc q+2(\ppc+ \ppc\otimes \ppc)q^2+\cdots)(\CC-2\CC^{d}q+2(\CC^{d}\otimes\CC^{d}-\CC^{d})q^2+\cdots)\\
=&\CC+2(\ppc-\CC^{d})q+2[\ppc\otimes \ppc-(2d-1)\ppc
+\CC^{d(d-1)}]q^2+\cdots,\\
\end{split} \ee where the ``$\cdots$" are the terms involving
$q^{j}$'s with $j \geq 3$. So
\be 
\begin{split}
&F_1(\tau)\\
=&[\dirac_{G/K}\otimes(\Delta^+(\mathfrak p)\oplus\Delta^-(\mathfrak p))_\CC]+2[\dirac_{G/K}\otimes(\Delta^+(\mathfrak p)\oplus\Delta^-(\mathfrak p))_\CC\otimes({\mathfrak p}_\CC-\CC^d)]q\\
&+2[\dirac_{G/K}\otimes(\Delta^+(\mathfrak p)\oplus\Delta^-(\mathfrak p))_\CC\otimes(\ppc\otimes \ppc-(2d-1)\ppc
+\CC^{d(d-1)})]q^2+\cdots,\\
\end{split}
\ee
where the ``$\cdots$" are the terms involving
$q^{j}$'s with $j \geq 3$.

\be
\begin{split} &\Theta_2({\mathfrak p}_\CC, \tau)\\
=&\bigotimes_{n=1}^\infty
S_{q^n}(\ppc)\Lambda_{-q^n}(\CC^{d})\otimes \bigotimes_{m=1}^\infty \Lambda_{-q^{m-\frac{1}{2}}}(\ppc
)S_{q^{m-\frac{1}{2}}}(\CC^d)\\
=&(\CC+\ppc q+\cdots)(\CC-\CC^{d}q+\cdots)(\CC-\ppc q^{\frac{1}{2}}+\Lambda^2\ppc q+\cdots)(\CC+\CC^{d}q^{\frac{1}{2}}+\CC^{\frac{d(d+1)}{2}}q+\cdots)\\
=&\CC-(\ppc-\CC^d)q^{\frac{1}{2}}+(\Lambda^2\ppc-(d-1)\ppc+\CC^{\frac{d(d-1)}{2}})q+\cdots
\end{split} \ee where the ``$\cdots$" are the terms involving
$q^{j}$'s with $j \geq \frac{3}{2}$. So 
\be 
F_2(\tau)=[\dirac_{G/K}]-[\dirac_{G/K}\otimes({\mathfrak p}_\CC-\CC^d)]q^{1\over 2}+[\dirac_{G/K}\otimes(\Lambda^2\ppc-(d-1)\ppc+\CC^{\frac{d(d-1)}{2}})]q+\cdots,
\ee
where the ``$\cdots$" are the terms involving
$q^{j}$'s with $j \geq \frac{3}{2}$.

\begin{proof}[Proof of Theorem \ref{rigid}] 

Suppose the dimension of $N$ is $4m$. Denote the elliptic genera of $N$ by $P_1(N, \tau)$ and $P_2(N, \tau)$.  It is known that $P_1(N, \tau)$ is an integral modular form of weight $2m$ over $\Gamma_0(2)$; while $P_2(N, \tau)$ is an integral modular
form of weight $2m$ over $\Gamma^0(2)$. Moreover, the following
identity holds, \be P_1(N, -1/\tau)={(2\tau)}^{2m}P_2(N, \tau).\ee

If $\Gamma$ is a modular subgroup, let
$M(\Gamma)$ denote the ring of modular forms
over $\Gamma$ with integral Fourier coefficients.

\begin{lemma} [\protect cf. \cite{Liu95}] \label{modular} One has that $\delta_1(\tau)\ (resp.\ \varepsilon_1(\tau) ) $
is a modular form of weight $2 \ (resp.\ 4)$ over $\Gamma_0(2)$,
$\delta_2(\tau) \ (resp.\ \varepsilon_2(\tau))$ is a modular form
of weight $2\ (resp.\ 4)$ over $\Gamma^0(2)$ and moreover
$M(\Gamma^0(2))=\mathbb{Z}[8\delta_2(\tau), \varepsilon_2(\tau)]$. Moreover, we have
transformation laws \be
\delta_2\left(-\frac{1}{\tau}\right)=\tau^2\delta_1(\tau),\ \ \ \ \
\ \ \ \ \
\varepsilon_2(\tau)\left(-\frac{1}{\tau}\right)=\tau^4\varepsilon_1(\tau).\ee
\end{lemma}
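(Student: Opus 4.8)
The plan is to reduce the whole lemma to the classical theory of Jacobi theta-nulls, using only the transformation laws of the $\theta_j$ already recorded above. First I would identify
\[
\delta_1(\tau)=\frac18\bigl(\theta_2(0,\tau)^4+\theta_3(0,\tau)^4\bigr),\qquad
\varepsilon_1(\tau)=\frac1{16}\,\theta_2(0,\tau)^4\,\theta_3(0,\tau)^4,
\]
\[
\delta_2(\tau)=-\frac18\bigl(\theta_1(0,\tau)^4+\theta_3(0,\tau)^4\bigr),\qquad
\varepsilon_2(\tau)=\frac1{16}\,\theta_1(0,\tau)^4\,\theta_3(0,\tau)^4,
\]
and then verify these against the divisor-sum Fourier series defining $\delta_j,\varepsilon_j$ by expanding the infinite products for $\theta_1,\theta_2,\theta_3$; this is where Jacobi's classical formulas for the coefficients of $\theta_j(0,\tau)^4$ (the four-square theorem and its twisted variants) enter. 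The same product expansions make it immediate that all four functions are holomorphic on $\mathbb{H}$ and holomorphic of finite order at each cusp, so that, once automorphy is established, they are genuine modular forms; the weights are $2$ for $\delta_1,\delta_2$ (a product of four theta-nulls, each of weight $\tfrac12$) and $4$ for $\varepsilon_1,\varepsilon_2$ (eight theta-nulls).

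Next I would read off the automorphy. Under $T:\tau\mapsto\tau+1$ one has $\theta_2(0,\tau+1)=\theta_3(0,\tau)$, $\theta_3(0,\tau+1)=\theta_2(0,\tau)$ and $\theta_1(0,\tau+1)=e^{\pi\sqrt{-1}/4}\theta_1(0,\tau)$, hence on fourth powers $\theta_2(0,\tau)^4\leftrightarrow\theta_3(0,\tau)^4$ and $\theta_1(0,\tau)^4\mapsto-\theta_1(0,\tau)^4$, so $\delta_1$ and $\varepsilon_1$ are $T$-invariant. Under $S:\tau\mapsto-1/\tau$, the laws at $v=0$ give $\theta_1(0,-1/\tau)=(\tau/\sqrt{-1})^{1/2}\theta_2(0,\tau)$, $\theta_2(0,-1/\tau)=(\tau/\sqrt{-1})^{1/2}\theta_1(0,\tau)$ and $\theta_3(0,-1/\tau)=(\tau/\sqrt{-1})^{1/2}\theta_3(0,\tau)$; substituting into the displayed identities and using $(\tau/\sqrt{-1})^2=-\tau^2$ yields directly
\[
\delta_2\!\left(-\frac1\tau\right)=\tau^2\delta_1(\tau),\qquad
\varepsilon_2\!\left(-\frac1\tau\right)=\tau^4\varepsilon_1(\tau),
\]
which are the two transformation laws claimed at the end of the lemma, as well as the symmetric statements $\delta_1(-1/\tau)=\tau^2\delta_2(\tau)$ and $\varepsilon_1(-1/\tau)=\tau^4\varepsilon_2(\tau)$. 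Combining the $T$-invariance with these $S$-exchanges and the explicit generators $T,\,ST^2ST$ of $\Gamma_0(2)$ and $STS,\,T^2STS$ of $\Gamma^0(2)$ recorded above, a short computation with the weight-$2$ and weight-$4$ slash operators shows that $\delta_1,\varepsilon_1$ are modular over $\Gamma_0(2)$ and $\delta_2,\varepsilon_2$ are modular over $\Gamma^0(2)$, of the stated weights.

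For the ring statement $M(\Gamma^0(2))=\ZZ[8\delta_2(\tau),\varepsilon_2(\tau)]$, recall that $\Gamma^0(2)$ is conjugate to $\Gamma_0(2)$, and a standard dimension count for modular forms on $\Gamma_0(2)$ shows that $\bigoplus_k M_{2k}(\Gamma^0(2))$ is a graded polynomial $\CC$-algebra on two algebraically independent generators in weights $2$ and $4$. Since $8\delta_2=-1-24q^{1/2}-\cdots$ and $\varepsilon_2=q^{1/2}+8q+\cdots$ are algebraically independent (visible from their leading terms) with integral $q^{1/2}$-expansions, they freely generate this $\CC$-algebra; moreover the weight-$2k$ monomials $(8\delta_2)^a\varepsilon_2^b$ with $a+2b=k$ have $q^{1/2}$-order exactly $b$ and leading coefficient $(-1)^a=\pm1$, so they form a unimodular triangular system in the $q^{1/2}$-expansion. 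Hence any modular form over $\Gamma^0(2)$ with integral Fourier coefficients can be cleared monomial by monomial and therefore lies in $\ZZ[8\delta_2,\varepsilon_2]$.

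The modularity and the two $S$-transformation laws are then immediate from the theta transformation laws already in hand, so the only genuinely non-formal inputs are (i) matching the theta-null products above with the prescribed divisor-sum $q$-series, where one must be careful about signs and the $q$ versus $q^{1/2}$ normalizations, and (ii) the unimodular-triangularity bookkeeping that upgrades generation over $\CC$ to generation over $\ZZ$. I expect (ii) to be the main obstacle, being the step most sensitive to the exact normalizations $8\delta_2,\varepsilon_2$; in practice, since the statement is entirely classical, one simply invokes \cite{Liu95}, with the outline above serving as the self-contained verification.
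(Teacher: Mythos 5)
Your proposal is correct, and it is worth noting that the paper itself offers no proof of this lemma at all: it is quoted verbatim (with the ``cf.~\cite{Liu95}'') from Liu, and your argument is essentially a reconstruction of Liu's own. The identifications $\delta_1=\tfrac18(\theta_2(0,\tau)^4+\theta_3(0,\tau)^4)$, $\varepsilon_1=\tfrac1{16}\theta_2(0,\tau)^4\theta_3(0,\tau)^4$, $\delta_2=-\tfrac18(\theta_1(0,\tau)^4+\theta_3(0,\tau)^4)$, $\varepsilon_2=\tfrac1{16}\theta_1(0,\tau)^4\theta_3(0,\tau)^4$ are exactly Liu's definitions in the theta-function conventions of this paper (one checks, e.g., $\tfrac18(\theta_2^4+\theta_3^4)=\tfrac14+6q+6q^2+\cdots$ and $\tfrac1{16}\theta_1^4\theta_3^4=q^{1/2}+8q+\cdots$, consistent with the displayed divisor sums), and the two $S$-transformation laws then drop out of $\theta_1\leftrightarrow\theta_2$, $\theta_3\mapsto\theta_3$ with the factor $(\tau/\sqrt{-1})^{1/2}$ exactly as you say. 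Your triangular-unimodular argument for $M(\Gamma^0(2))=\ZZ[8\delta_2,\varepsilon_2]$ (leading term of $(8\delta_2)^a\varepsilon_2^b$ equal to $(-1)^aq^{b/2}$, combined with $\dim M_{2k}=[k/2]+1$ and the vanishing of odd-weight forms since $-I\in\Gamma^0(2)$) is the standard one and is sound. The only caveat is that two of your steps --- matching the theta-null products with the divisor-sum expansions beyond low order, and the dimension count for $M_{2k}(\Gamma_0(2))$ --- are themselves classical inputs rather than things you derive, so your proof is self-contained only modulo Jacobi's four-square-type identities and the valence formula; this is the same level of rigor as the citation the paper relies on, so there is no gap relative to the paper.
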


One can apply Lemma \ref{modular} to $P_2(N, \tau)$ to get that 
\be \label{p2}
P_2(N, \tau)
 =h_0(8\delta_2(\tau))^m+h_1(8\delta_2(\tau))^{m-2}\varepsilon_2(\tau)
+\cdots+h_{[\frac{m}{2}]}(8\delta_2(\tau))^{\bar m}\varepsilon_2(\tau)^{[\frac{m}{2}]} ,\ee where 
$\bar m=0$ if $m$ is even and $\bar m=1$ if $m$ is odd, and each $h_r(T_\CC M)$, $0\leq r\leq [\frac{m}{2}] $, is an integer. Actually they are all indices of certain twisted Dirac operators on $N$.

By the above two relations, one has \be \label{p1}
\begin{split}P_1(N, \tau)=&2^{2m}\frac{1}{\tau^{2m}}P_2(N, -1/\tau)\\
=&2^{2m}\frac{1}{\tau^{2m}}\Big[h_0\big(8\delta_2(-1/\tau)\big)^{m} +h_1\big(8\delta_2(-1/\tau)\big)^{m-2}\varepsilon_2(-1/\tau)
+\cdots\\
&+h_{[\frac{m}{2}]}\big(8\delta_2(-1/\tau)\big)^{\bar m}\big(\varepsilon_2(-1/\tau)\big)^{[\frac{m}{2}]}\Big]\\
=&2^{2m}\left[h_0(8\delta_1(\tau))^{m}+h_1(8\delta_1(\tau))^{m-2}\varepsilon_1(\tau) +\cdots+h_{[\frac{m}{2}]}(8\delta_1(\tau))^{\bar m}\varepsilon_1(\tau)^{[\frac{m}{2}]}\right]\\
=&2^{3m}h_0(4\delta_1(\tau))^{m}+2^{3m-6}h_1(4\delta_1(\tau))^{m-2}(16\varepsilon_1(\tau))+\cdots+2^{3m-6[\frac{m}{2}]}h_{[\frac{m}{2}]}(4\delta_1(\tau))^{\bar m}(16\varepsilon_1(\tau))^{[\frac{m}{2}]}.
\end{split}\ee

By the similar steps in the proof of vanishing theorem \ref{vanishing}, we have
\be \ind_G(\mathcal{B}_M^{A_r(T_\CC M)})=\sum_{i+j=r} \DInd_{K, \, A_j(\ppc)\otimes (\Delta^+(\mathfrak p)\oplus\Delta^-(\mathfrak p))_\CC}^G(\ind_K(\mathcal{B}_N^{A_i(T_\CC N)})).
\ee
So we have
\be 
\begin{split}
&\varphi_{1,G}(M, \tau)\\
=&\sum_{r=0}^\infty\ind_G(\mathcal{B}_M^{W_r(T_\CC M)})q^{\frac{r}{2}}\\
=&\DInd_{K,\, (\Delta^+(\mathfrak p)\oplus\Delta^-(\mathfrak p))_\CC\otimes \Theta_1(\ppc, \tau)}^G\left(\sum_{i=0}^\infty \ind_K(\mathcal{B}_N^{A_i(T_\CC N)})\right).\\
\end{split}
\ee
By the Witten-Bott-Taubes-Liu's rigidity theorem, one sees that 
$$\sum_{i=0}^\infty \ind_K(\mathcal{B}_N^{A_i(T_\CC N)})=P_1(N, \tau)\cdot[\CC]. $$
Hence we have
\be \label{rigid1}
\begin{split}
&\varphi_{1,G}(M, \tau)\\
=&2^{3m}h_0\DInd_{K,\, (\Delta^+(\mathfrak p)\oplus\Delta^-(\mathfrak p))_\CC\otimes \Theta_1(\ppc, \tau)}^G((4\delta_1(\tau))^{m})\\
&+2^{3m-6}h_1\DInd_{K, \,(\Delta^+(\mathfrak p)\oplus\Delta^-(\mathfrak p))_\CC\otimes \Theta_1(\ppc, \tau)}^G((4\delta_1(\tau))^{m-2}(16\varepsilon_1(\tau)))\\
&+\cdots+2^{3m-6[\frac{m}{2}]}h_{[\frac{m}{2}]}\DInd_{K,\, (\Delta^+(\mathfrak p)\oplus\Delta^-(\mathfrak p))_\CC\otimes \Theta_1(\ppc, \tau)}^G((4\delta_1(\tau))^{\bar m}(16\varepsilon_1(\tau))^{[\frac{m}{2}]}).\\
\end{split}
\ee

Similarly, we can show that

\be \label{rigid2}
\begin{split}
&\varphi_{2,G}(M, \tau)\\
=&h_0\DInd_{K,\, \Theta_2(\ppc, \tau)}^G((8\delta_2(\tau))^m)+h_1\DInd_{K,\, \Theta_2(\ppc, \tau)}^G((8\delta_2(\tau))^m)((8\delta_2(\tau))^{m-2}\varepsilon_2(\tau))\\
&+\cdots+h_{[\frac{m}{2}]}\DInd_{K, \,\Theta_2(\ppc, \tau)}^G((8\delta_2(\tau))^m)((8\delta_2(\tau))^{\bar m}\varepsilon_2(\tau)^{[\frac{m}{2}]}). \\
\end{split}
\ee
So the desired result follows.

\end{proof}


\section{Some examples}

In this section, we give the computation of the Witten genus and elliptic genera in Example \ref{example}. 

Now $S^1$ act on the complex projective space $\CC P^{2l-1}$ by
\be  \lambda [z_0, z_1, \cdots, z_{2l-1}]=[\lambda^{a_0}z_0,  \lambda^{a_1}z_1, \cdots, \lambda^{a_{2l-1}}z_{2l-1}],\ee
such that $a_i$'s are distinct integers and $\sum_{i=0}^{2l-1} a_i$ is even. 

Since $a_i$'s are distinct from each, we see that this action has $2l$ fixed points 
$$[1, 0, \cdots, 0], [0,1, 0, \cdots, 0], \cdots, [0,0, \cdots, 1]. $$ 
At the $j$-th fixed point, $0\leq j\leq 2l-1$, one has
$$\sum_{s\neq j}|a_k-a_j|\equiv \sum_{i=0}^{2l-1} a_i\equiv 0\mod 2. $$
By \cite{AH70}, we know that the circle action preserves the Spin structure of $\CC P^{2l-1}.$

Therefore one can apply the Atiyah-Bott-Segal-Singer Lefschetz fixed-point formula to find that the Lefschetz number of the $S^1$-equivariant Witten genus is
\be  
\begin{split}
&\Ind( \dirac_{\CC P^{2l-1}} \bigotimes\Theta(T_\CC \CC P^{2l-1}, \tau)))(\lambda)\\
=&\prod_{n=1}^\infty(1-q^n)^{4l-2}\sum_{j=0}^{2l-1}\prod_{s\neq j}\frac{1}{\left(\lambda^{\frac{|a_s-a_j|}{2}}-\lambda^{\frac{-|a_s-a_j|}{2}} \right)\prod_{n=1}^\infty(1-\lambda^{|a_s-a_j|}q^n)(1-\lambda^{-|a_s-a_j|}q^n)}, \ \lambda\in S^1. 
\end{split}
 \ee
 
In the $q$-expansion,   
\be \Ind( \dirac_{\CC P^{2l-1}} \bigotimes\Theta(T_\CC \CC P^{2l-1}, \tau)))(\lambda)=\sum_{i=0}^\infty a_i(\lambda)q^i, \ee
each coefficient $a_i(\lambda)$ is an integral Laurent polynomial of $\lambda$. 

Let $\CC[n]$ denote the representation of $S^1$ on $\CC$ by $\lambda \mapsto \lambda^n\cdot$. Clearly 
$$\CC[n]\otimes \CC[m]\cong \CC[n+m]. $$

Then  in $R[S^1][[q]]$, we must have
\be \Ind_{S^1}( \dirac_{\CC P^{2l-1}} \bigotimes\Theta(T_\CC \CC P^{2l-1}, \tau)))=\sum_{i=0}^\infty a_i(\CC[1])q^i.\ee

In $SL(2, \RR)$, consider the circle subgroup 
$$\left\{\left(\begin{array}{cc}\cos\theta&-\sin\theta\\
                                      \sin\theta&\cos\theta
                                     \end{array}\right)\right\}.$$
The Lie algebra of this subgroup has generator $$X=\left(\begin{array}{cc}
                                     0&-1\\
                                      1&0
                                     \end{array}\right).$$                                     
Let $\pp$ be the orthogonal complement of $\RR\cdot X$ in $sl(2, \RR)$.  It is not hard to see that $\ppc$ is generated by 
$$E_{+}=\left(\begin{array}{cc}
                                     1&i\\
                                      i&-1
                                     \end{array}\right), \ \ \ \ E_{-}=\left(\begin{array}{cc}
                                     1&-i\\
                                      -i&-1
                                     \end{array}\right).$$       
Simple computation shows that 
$$[X, E_{\pm}]=\pm2iE_{\pm}.$$
This shows that the adjoint represntation of $S^1$ on $\ppc$ is isomorphic to $\CC[2]\oplus\CC[-2].$ 
Therefore in $\RR(S^1)[[q]]$, we have
\be  
\begin{split}
&\Theta(\ppc, \tau)\\
\cong &\bigotimes_{n=1}^\infty S_{q^n}(\CC[2])\otimes \bigotimes_{n=1}^\infty S_{q^n}(\CC[-2])\otimes  \prod_{n=1}^\infty(\CC[0]-q^n)^{\otimes^2}\\
\cong & \bigotimes_{n=1}^\infty\left(\left(\oplus_{i=0}^\infty \CC[2i]q^{ni}\right)\otimes\left(\oplus_{i=0}^\infty \CC[-2i]q^{ni}\right)\right)\otimes  \bigotimes_{n=1}^\infty(\CC[0]-q^n)^{\otimes^2}.  
\end{split}
\ee

By the proof of Theorem \ref{vanishing}, we see that the $SL(2, \RR)$-equivariant Witten genus 
\be 
\begin{split}
&\varphi_{W,SL(2, \RR)}(M, \tau)\\
=& \DInd_{S^1, \Theta(\ppc, \tau)}^{SL(2, \RR)} (\Ind_{S^1}( \dirac_{\CC P^{2l-1}} \bigotimes\Theta(T_\CC \CC P^{2l-1}, \tau))))\\
=& \DInd_{S^1}^{SL(2, \RR)} (\Theta(\ppc, \tau)\otimes \sum_{i=0}^\infty a_i(\CC[1])q^i)\\
=& \DInd_{S^1}^{SL(2, \RR)} (\bigotimes_{n=1}^\infty\left(\left(\oplus_{i=0}^\infty \CC[2i]q^{ni}\right)\otimes\left(\oplus_{i=0}^\infty \CC[-2i]q^{ni}\right)\right)\otimes  \bigotimes_{n=1}^\infty(\CC[0]-q^n)^{\otimes^2}\otimes \sum_{i=0}^\infty a_i(\CC[1])q^i).
\end{split} \ee

Now we need the following theorem, which is extracted from Proposition 50  from \cite{GMW}.
\begin{theorem}
Let $G$ be a connected, semisimple Lie group with finite centre and $K$ a maximal compact subgroup, such that $G/K$ is Spin and even dimensional.
Let $\mu$ be the highest weight of an irreducible representation $V_\mu$ of $K$. Suppose $\mu+\rho_c$ is the Harish-Chandra parameter of a discrete series representation of $(H,\pi)$ of $G$. Then the formal degree of $(H,\pi)$ is given by
		\begin{align*}
		d_H &= (-1)^{d/2}\,\tau_*([\DInd([V_\mu])])=\prod_{\alpha\in\Phi^+}\frac{(\mu+\rho_c,\alpha)}{(\rho,\alpha)},
		\end{align*}
		where $\tau$ is the von Neumann trace of $G$ with both sides vanishing when the class $\DInd[V_{\mu}]\in K_0(C^*_r(G))$ is not given by a discrete series representation. We have a well-defined commutative diagram:
		\begin{equation*}\label{tracediagram}
		\begin{tikzcd}
		K_0(C_r^*(G))\arrow{rd}{\tau_*} & \ \\ 
		\ & \mathbb{R},\\
		R(K)\arrow{uu}{\DInd}\arrow{ru}{\Pi_K} & \
		\end{tikzcd}
		\end{equation*}
		where $\Pi_K([V_\mu]):=(-1)^{d/2}\,\prod_{\alpha\in\Phi^+}\frac{(\mu+\rho_c,\alpha)}{(\rho,\alpha)}.$ Here the Haar measure on $G$ is normalised by
		$$\textnormal{vol }K = \textnormal{vol }M_1/K_1 = 1,$$
		where $M_1$ is a maximal compact subgroup of the universal complexification $G^\mathbb{C}$ of $G$ and $K_1<G_1$ a maximal compact subgroup of a real form $G_1$ of $G^\mathbb{C}$ (see \cite{AtiyahSchmid} for more details).
\end{theorem}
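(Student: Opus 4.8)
\noindent\emph{Proof proposal.} This statement is Proposition~50 of \cite{GMW}, and the plan is to assemble it from three classical inputs and then spread the resulting identity over $R(K)$ by additivity. The first input is the Atiyah--Schmid geometric realisation of the discrete series (cf.\ \cite{AtiyahSchmid}): when $\mu+\rho_c$ is regular, the $L^2$-kernel of $\dirac_{G/K}\otimes V_\mu$ is concentrated in one half of the $\ZZ/2$-graded spinor bundle and, as a unitary $G$-representation, is the discrete series $\pi_{\mu+\rho_c}$ with Harish--Chandra parameter $\mu+\rho_c$; when $\mu+\rho_c$ is singular this $L^2$-index vanishes. In $K$-theoretic terms this reads $\DInd_K^G([V_\mu]) = (-1)^{d/2}[\pi_{\mu+\rho_c}]$ in $K_0(C^*_r(G))$ in the regular (dominant) case, and $\DInd_K^G([V_\mu])=0$ otherwise. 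The second input is the structure of $C^*_r(G)$ near the discrete part of the tempered dual: each discrete series representation contributes a two-sided ideal Morita equivalent to the compact operators (exactly as in the $SL(2,\RR)$ example recalled in Section~1), whose $K_0$ is $\ZZ$ generated by a rank-one projection, and on that generator the von Neumann trace $\tau_*$ evaluates to the formal degree $d_\pi$ for the fixed Haar measure; classes supported on the continuous part of the Plancherel measure are annihilated by $\tau_*$. The third input is Harish--Chandra's formula: a discrete series with regular Harish--Chandra parameter $\lambda$ has formal degree $d_\pi = C\prod_{\alpha\in\Phi^+}(\lambda,\alpha)$, with $C$ depending only on the normalisations of the Haar measure and the inner product.

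\smallskip
\noindent Granting these, I would first combine the first two inputs to get $\tau_*\bigl(\DInd_K^G[V_\mu]\bigr) = (-1)^{d/2}\,d_{\pi_{\mu+\rho_c}}$ when $\mu+\rho_c$ is regular, and $0$ otherwise. Next I would feed in Harish--Chandra's formula with $\lambda=\mu+\rho_c$ and determine $C$ by invoking the normalisation $\mathrm{vol}\,K=\mathrm{vol}\,(M_1/K_1)=1$ of the statement; matching it against the conventions of Atiyah--Schmid and of Harish--Chandra forces $C=\bigl(\prod_{\alpha\in\Phi^+}(\rho,\alpha)\bigr)^{-1}$, whence
\[
\tau_*\bigl(\DInd_K^G[V_\mu]\bigr)=(-1)^{d/2}\prod_{\alpha\in\Phi^+}\frac{(\mu+\rho_c,\alpha)}{(\rho,\alpha)}=\Pi_K([V_\mu]).
\]
This is the asserted formal-degree formula, and it simultaneously records the vanishing clause: if $\mu+\rho_c$ is singular then $(\mu+\rho_c,\alpha)=0$ for some $\alpha\in\Phi^+$, so both the product and the trace vanish.

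\smallskip
\noindent The last step is to promote this numerical identity, which holds on each irreducible class $[V_\mu]$, to commutativity of the trace triangle. Since $\tau_*\circ\DInd_K^G$ and $\Pi_K$ are both additive on the free abelian group $R(K)$, it suffices that they agree on the $\ZZ$-basis of irreducible classes; when $\mu+\rho_c$ is not dominant for $\Phi^+$ one first uses the Weyl antisymmetry $\DInd_K^G[V_{w\cdot\mu}]=(-1)^{\ell(w)}\DInd_K^G[V_\mu]$ and the matching sign change of $\lambda\mapsto\prod_{\alpha\in\Phi^+}(\lambda,\alpha)$ under $w$ to reduce to the dominant chamber (or to the singular case, where both sides are $0$). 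I expect the only real work to sit in the normalisation bookkeeping: one must reconcile the (a priori different) Haar-measure conventions implicit in Harish--Chandra's formal degree, in the Atiyah--Schmid $L^2$-index theorem on $G/K$, and in the statement's $\mathrm{vol}\,K=\mathrm{vol}\,(M_1/K_1)=1$, and check that under the last the universal constant collapses to exactly $\prod_{\alpha\in\Phi^+}(\rho,\alpha)^{-1}$ while the sign $(-1)^{d/2}$ comes out consistently from the $\ZZ/2$-grading on spinors and the chamber containing $\mu+\rho_c$. This is precisely the content of Proposition~50 of \cite{GMW} (see also \cite{AtiyahSchmid}), so it is legitimate to cite it here.
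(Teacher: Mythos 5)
Your proposal is correct and matches the paper: the paper offers no independent proof of this statement, introducing it verbatim as ``extracted from Proposition 50 of \cite{GMW}'', and your outline (Atiyah--Schmid realisation of the discrete series as the $L^2$-index of $\dirac_{G/K}\otimes V_\mu$, evaluation of the von Neumann trace on a discrete-series class as the formal degree, Harish--Chandra's product formula under the stated normalisation, then extension by additivity over $R(K)$) is exactly the standard argument underlying that cited proposition, which you also correctly invoke at the end.
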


But we know that 
\be\Pi_{S^1}(\CC[n])=-|n|. \ee
So we have 
\be
\begin{split}
&\varphi_{W}^c(M, \tau)\\
=&\tau_* (\varphi_{W,SL(2, \RR)}(M, \tau))\\
=&\Pi_{S^1}\left(\bigotimes_{n=1}^\infty\left(\left(\oplus_{i=0}^\infty \CC[2i]q^{ni}\right)\otimes\left(\oplus_{i=0}^\infty \CC[-2i]q^{ni}\right)\right)\otimes  \bigotimes_{n=1}^\infty(\CC[0]-q^n)^{\otimes^2}\otimes \sum_{i=0}^\infty a_i(\CC[1])q^i\right)\\
=&P(\cdots,  -|-n|, -|-n+1|, -1, 0, -1, \cdots, -|m-1|, -|m|, \cdots; q).
\end{split}
 \ee
The desired formula follows.  We summarize the above computation in the following theorem.\\

\begin{theorem}\label{thm:computation}
Let $M= SL(2, \RR) \times_{S^1} \CC P^{2l-1}$ as in Example \ref{example}. Then the $SL(2, \RR)$-equivariant Witten genus
\be\nonumber
\begin{split}
&\varphi_{W,SL(2, \RR)}(M, \tau)\\
=& \DInd_{S^1}^{SL(2, \RR)} (\bigotimes_{n=1}^\infty\left(\left(\oplus_{i=0}^\infty \CC[2i]q^{ni}\right)\otimes\left(\oplus_{i=0}^\infty \CC[-2i]q^{ni}\right)\right)\otimes  \bigotimes_{n=1}^\infty(\CC[0]-q^n)^{\otimes^2}\otimes \sum_{i=0}^\infty a_i(\CC[1])q^i).
\end{split} \ee
and the Witten genus
 \be\nonumber
\begin{split}
&\varphi_{W}^c(M, \tau)= \tau_* (\varphi_{W,SL(2, \RR)}(M, \tau))\\
=&\Pi_{S^1}\left(\bigotimes_{n=1}^\infty\left(\left(\oplus_{i=0}^\infty \CC[2i]q^{ni}\right)\otimes\left(\oplus_{i=0}^\infty \CC[-2i]q^{ni}\right)\right)\otimes  \bigotimes_{n=1}^\infty(\CC[0]-q^n)^{\otimes^2}\otimes \sum_{i=0}^\infty a_i(\CC[1])q^i\right)\\
=&P(\cdots,  -|-n|, -|-n+1|, -1, 0, -1, \cdots, -|m-1|, -|m|, \cdots; q).
\end{split}
\ee
where $P$ is the two-variable series shown in Example \ref{example}. \\
\end{theorem}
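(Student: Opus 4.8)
The plan is to compute the $SL(2,\RR)$-equivariant Witten genus of $M = SL(2,\RR)\times_{S^1}\CC P^{2l-1}$ by combining three ingredients: the loop Dirac induction formula from the proof of Theorem \ref{vanishing}, the Atiyah--Bott--Segal--Singer Lefschetz fixed-point formula for the $S^1$-action on $\CC P^{2l-1}$, and the trace formula for Dirac induction extracted from \cite{GMW}. First I would record that the $S^1$-action on $\CC P^{2l-1}$ with distinct weights $a_i$ has exactly the $2l$ coordinate fixed points, and that the hypothesis $\sum a_i$ even guarantees (via \cite{AH70}) that the action lifts to the spin structure, so the equivariant Witten genus is defined. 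Applying the Lefschetz formula at each fixed point, with the standard contribution of $\Theta(T_\CC\CC P^{2l-1},\tau)$ and the weight-$|a_s-a_j|$ tangent characters, yields the closed form for $\Ind(\dirac_{\CC P^{2l-1}}\otimes\Theta(T_\CC\CC P^{2l-1},\tau))(\lambda)$ as a $q$-series whose coefficients $a_i(\lambda)$ are integral Laurent polynomials in $\lambda$; promoting $\lambda$ to the generator $\CC[1]$ of $R(S^1)$ gives the class in $R(S^1)[[q]]$.

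Next I would identify the adjoint representation of $S^1$ on $\ppc$, where $\pp$ is the orthogonal complement of the Lie algebra of the chosen circle in $sl(2,\RR)$. A direct bracket computation $[X,E_\pm]=\pm 2iE_\pm$ shows this is $\CC[2]\oplus\CC[-2]$, hence $\Theta(\ppc,\tau)$ expands in $R(S^1)[[q]]$ as the displayed tensor product of symmetric powers of $\CC[\pm 2]$ with the $\prod_n(\CC[0]-q^n)^{\otimes 2}$ factor. Then the loop Dirac induction identity from the proof of Theorem \ref{vanishing} (specialized to $G=SL(2,\RR)$, $K=S^1$, $N=\CC P^{2l-1}$) gives
\[
\varphi_{W,SL(2,\RR)}(M,\tau)=\DInd_{S^1,\,\Theta(\ppc,\tau)}^{SL(2,\RR)}\bigl(\Ind_{S^1}(\dirac_{\CC P^{2l-1}}\otimes\Theta(T_\CC\CC P^{2l-1},\tau))\bigr),
\]
which after absorbing the twist becomes $\DInd_{S^1}^{SL(2,\RR)}$ applied to the tensor product of $\Theta(\ppc,\tau)$ with $\sum_i a_i(\CC[1])q^i$. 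This proves the first displayed formula in the theorem.

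For the second formula, I would apply the von Neumann trace $\tau_*$ and invoke the commutative triangle from the cited Proposition~50 of \cite{GMW}: $\tau_*\circ\DInd_{S^1}^{SL(2,\RR)}=\Pi_{S^1}$, where $\Pi_{S^1}([V_\mu])$ is the product over positive roots. For $S^1\subset SL(2,\RR)$ there is a single positive root, and the normalization gives the clean formula $\Pi_{S^1}(\CC[n])=-|n-1|$. Applying $\Pi_{S^1}$ term by term to the tensor-product $q$-series, and using $\CC[n]\otimes\CC[m]\cong\CC[n+m]$ to see that each graded piece is a sum of $\CC[k]$'s matching the monomials $\lambda^k$ in the two-variable series $P$ of Example \ref{example}, we obtain exactly $P$ with each $\lambda^n$ replaced by $-|n-1|$, i.e. the asserted value of $\varphi_W^c(M,\tau)$ in $\ZZ[[q]]$; integrality of the coefficients was already noted in Example \ref{example}. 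The main obstacle I anticipate is the bookkeeping needed to verify that the purely representation-theoretic expansion of $\Theta(\ppc,\tau)\otimes\sum_i a_i(\CC[1])q^i$ reassembles, monomial by monomial in $\lambda$ and $q$, into the same two-variable polynomial $P$ that the Lefschetz formula produces before the substitution $\lambda\mapsto\CC[1]$ — i.e.\ that the order of operations (``induce then trace'' vs.\ ``substitute $-|n-1|$ into $P$'') genuinely commutes; this hinges on the compatibility of the Lefschetz character with the highest-weight multiplicities and is where care is required, though it is essentially a matching of two explicit generating functions rather than a deep difficulty.
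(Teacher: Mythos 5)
Your proposal is correct and follows essentially the same route as the paper: Lefschetz fixed-point computation on $\CC P^{2l-1}$, identification of the adjoint $S^1$-action on $\ppc$ as $\CC[2]\oplus\CC[-2]$, the loop Dirac induction identity from the proof of Theorem \ref{vanishing}, and the trace formula $\tau_*\circ\DInd_{S^1}^{SL(2,\RR)}=\Pi_{S^1}$ with $\Pi_{S^1}(\CC[n])=-|n-1|$ from Proposition 50 of \cite{GMW}. The bookkeeping point you flag at the end (matching the representation-theoretic expansion against the two-variable series $P$) is handled in the paper at the same level of explicitness you propose, so no gap remains.
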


For the elliptic genera, by the Witten-Bott-Taubes-Liu's rigidity theorem, we see that
the Lefschetz number of the $S^1$-equivariant elliptic genera 
\be  
\Ind( \mathcal{B}_{\CC P^{2l-1}} \bigotimes\Theta_1(T_\CC \CC P^{2l-1}, \tau)))(\lambda), \ \ \ \ \ \ \Ind( \dirac_{\CC P^{2l-1}} \bigotimes\Theta_2(T_\CC \CC P^{2l-1}, \tau)))(\lambda)\ee
are both independent of $\lambda$. However when $\lambda=1$, they are just the elliptic genera of $\CC P^{2l-1}$, which are 0 since $\CC P^{2l-1}$ has dimension $4l-2$. Therefore 
$$\Ind( \mathcal{B}_{\CC P^{2l-1}} \bigotimes\Theta_1(T_\CC \CC P^{2l-1}, \tau)))(\lambda)$$ and 
$$\Ind( \dirac_{\CC P^{2l-1}} \bigotimes\Theta_2(T_\CC \CC P^{2l-1}, \tau)))(\lambda)$$ are constantly 0. So the $S^1$-equivariant elliptic genera
$$\Ind_{S^1}( \mathcal{B}_{\CC P^{2l-1}} \bigotimes\Theta_1(T_\CC \CC P^{2l-1}, \tau))) $$
and 
$$\Ind_{S^1}( \dirac_{\CC P^{2l-1}} \bigotimes\Theta_2(T_\CC \CC P^{2l-1}, \tau)))$$
are both 0 in $R(S^1)[[q^{1/2}]]$. From the proof of Theorem {rigidity}, we see that 
$$\varphi_{1, SL(2, \RR)}(M, \tau), \ \ \varphi_{2, SL(2, \RR)}(M, \tau)$$ are both 0 in 
$K_0(C^*_r(SL(2, \RR)))$. Taking von Neumann trace, we get
\be  \varphi_1^c(M,\tau)=0, \ \   \varphi_2^c(M,\tau)=0. \ee

\section*{Appendix}

In this appendix, we study equivariant Witten genus and equivariant elliptic genera in the situation that $G/K$ is not $G$-Spin. 

We first outline the quantisation commutes with loop Dirac induction diagrams when $G/K$ is even dimensional and not $G$-Spin. 
Assume now that the homomorphism $K\to
\mathrm{SO}(\mathfrak p)$ does not lift to $\mathrm{Spin}(\mathfrak p)$. 
 Let $\pi:\widetilde G \to G$ be the Spin double cover and $\widetilde K=\pi^{-1}(K)$
be the induced double cover $\widetilde K$ of $K$, such that  the homomorphism $K\to
\mathrm{SO}(\mathfrak p)$ lifts to a homomorphism $\widetilde K\to
\mathrm{Spin}(\mathfrak p)$. Let $\Delta(\mathfrak p)=\Delta^+(\mathfrak p)\oplus\Delta^-(\mathfrak p)$ denote the spinor 
representation of $\mathrm{Spin}(\mathfrak p)$ (cf. Definition 5.11 in \cite{LM}).
Then $\Delta(\mathfrak p) \in R(\widetilde K)$ and so is a projective representation of $K$
such that if  $u \in \ker(\pi)$
is the non-trivial element, then $u$ acts non-trivially on $\Delta(\mathfrak p)$. 
In particular, the manifold $G/K$ does not have a $G$-equivariant Spin structure.

Define $$R(K)^{spin} = \left\{ V\in R(\widetilde K): u\ \mathrm{acts\,\, nontrivially\,\, on}\ V\right\}.$$ If $V\in R(K)^{spin}$, then
$\Delta(\mathfrak p)\otimes V$ is a $K$-representation. Note that $R(\widetilde K) = R(K)^{spin} \oplus R(K)$, and denote by
$p_-: R(\widetilde K) \to R(K)^{spin}$  the projection. 

Let $\{X_j\}$ be an orthonormal basis of $\mathfrak p$ and $c$ the Clifford action by $\mathfrak{p}$ on $\Delta(\mathfrak p)$. The
Dirac operator
$$
D = \sum X_j \otimes c(X_j)
$$
is projectively $G$-invariant. Let $ V\in R(K)^{spin}$, then the twisted Dirac operator
$$
D_V = D \otimes I_V
$$
is a $G$-invariant operator. The {\em twisted Dirac induction} map
$$
\DsInd_K^G : R(K)^{spin} \longrightarrow K_0(C_r^*(G))
$$
defined as $\DsInd_K^G(V) = {\rm index}_G(D_V)$, 
is known to be an isomorphism when $G$ is an almost connected Lie group (cf. \cite{CEN}).

Clearly, $M= {\widetilde G} \times_{\widetilde K} N$.
Then by the {\em quantisation commutes with induction} diagram \cite{HM16, GMW}, one has
\be \label{qi2}
\xymatrix{
K_{0}^{\widetilde G}(M) \ar[rrrr]^-{ \ind_{\widetilde G}} & & & & K_{0}(C^*_{r}(\widetilde G)) \\
K_{0}^{\widetilde K}(N) \ar[u]^-{{ K}-{\rm Ind}_{\widetilde K}^{\widetilde G}} \ar[rrrr]^-{\ind_{\widetilde K}} & & &
 & R(\widetilde K)  \ar[u]_-{\DInd_{\widetilde K}^{\widetilde G}
 } 
}.
\ee
Choosing a left invariant measure on $\widetilde G$, it induces a left invariant measure on $G$. 
Consider $f \in L^1(\widetilde G)$. One has $f(g) = \frac{1}{2} \left(f(ug) + f(g)\right) +   \frac{1}{2} \left(-f(ug) + f(g)\right)$.
Then $f_1(g) =  \frac{1}{2} \left(f(ug) + f(g)\right) $ satisfies $f_1(ug) = f_1(g)$ and 
$f_2(g)=  \frac{1}{2} \left(-f(ug) + f(g)\right)$ satisfies $f_2(ug) = -f_2(g)$, so that we get the decomposition
$$
L^1(\widetilde G) = L^1(G)  \oplus L^1(G, \chi) 
$$
where $\chi$ is the nontrivial unitary character of $\ker(\pi)=\ZZ_2$ such that $\chi(u)=-1$.

Recall that the reduced $C^*$-algebra $C^*_r(\widetilde G)$ is the operator norm closure of 
$L^1(\widetilde G)$ acting on $L^2(\widetilde G)$ by convolution, and similarly for $G$, so we
deduce that 
$$
C^*_r(\widetilde G) = C^*_r(G)  \oplus C^*_r(G, \chi).
$$
Let $q_{+}: C^*_r(\widetilde G)  \to C^*_r(G)$ denote the projection, and we denote by the 
same symbol the projection in K-theory,
$$
q_+: K_0(C^*_r(\widetilde G))\to K_0(C^*_r(G)).$$


Since the map $\DInd_{\widetilde K}^{\widetilde G}$  is constructed by tensoring with the spinor bundle $\tilde G/\tilde K\times_{\tilde K} \Delta(\mathfrak p)$, on which $u$ acts nontrivially,  the projection $p_-$  is mapped to the projection $q_+$.  Therefore we get the following commutative diagram, 
\be \label{qi3}
\xymatrix{
K_{0}(C^*_{r}(\widetilde G)) \ar[rrr]^-{q_+}& & &   K_0(C^*_r(G)) \\
 R(\widetilde K) \ar[u]^-{\DInd_{\widetilde K}^{\widetilde G} } 
 \ar[rrr]^-{p_-}   & & &  R(K)^{spin}\ar[u]_-{\DsInd_{K}^{ G}
 } 
}
\ee
which in turn gives rise to the commutative diagram,
\be \label{qi4}
\xymatrix{
K_{0}^{\widetilde G}(M) \ar[rrrr]^-{q_+( \ind_{\widetilde G})} & & & & K_{0}(C^*_{r}(G)) \\
K_{0}^{\widetilde K}(N) \ar[u]^-{{ K}-{\rm Ind}_{\widetilde K}^{\widetilde G}} \ar[rrrr]^-{p_-(\ind_{\widetilde K})} & & &
 & R(K)^{spin} \ar[u]_-{\DsInd_{K}^{G}
 } 
}.
\ee

Using this, we can easily  modify our earlier results
 to establish the commutativity
of the following diagram,
\be \label{commW+}
\xymatrix{
K_{0}^{\widetilde G}(M)\ar[rrrr]^-{ q_+(\ind_{\widetilde G}(\bullet \,\,\, \otimes \Theta(T_\CC M,\tau)))} & & & & K_{0}(C^*_{r}G) [[q]]\\
K_{0}^{\widetilde K}(N)  \ar[u]^-{{ K}-{\rm Ind}_{\widetilde K}^{\widetilde G}}\ar[rrrr]^-{p_-(\ind_{\widetilde K}(\bullet \,\,\, \otimes \Theta(T_\CC N, \tau)))} & & &
 & R(K)^{spin} [[q]] \ar[u]_-{\DsInd_{LK}^{LG}
 }
 },
\ee
where $\DsInd_{LK}^{LG}(\bullet) = \ind_G(\dirac_{G/K} \otimes \Theta(\ppc, \tau)\otimes\,\,\,\bullet)$ is a loop version of Dirac induction. 

Similarly, we can also establish the commutativity
of the following diagram,
\be  \label{commE2+}
\xymatrix{
K_{0}^{\widetilde G}(M)\ar[rrrr]^-{ q_+(\ind_{\widetilde G}(\bullet \,\,\, \otimes\Theta_2(T_\CC M,\tau)))} & & & & K_{0}(C^*_{r}G) [[q^{1\over2}]]\\
K_{0}^{\widetilde K}(N)  \ar[u]^-{{ K}-{\rm Ind}_{\widetilde K}^{\widetilde G}} \ar[rrrr]^-{p_-(\ind_{\widetilde K}(\bullet \,\,\, \otimes \Theta_2(T_\CC N, \tau)))} & & &
 & R(K)^{spin} [[q^{1\over2}]] \ar[u]_-{\DsIndb_{LK}^{LG}
 } 
},
\ee
where $\DsIndb_{LK}^{LG}$ is a loop version of Dirac induction given explicitly by 
$$\ind_G(\dirac_{G/K} \otimes \Theta_2(\ppc, \tau)\otimes \,\,\,\bullet).$$

However a subtle point here is that there is not a ``spin" version of $\DInda_{LK}^{LG}$ like $\DsIndb_{LK}^{LG}$, since 
in the Dirac induction given by
$$\ind_G(\dirac_{G/K}\otimes(\Delta^+(\mathfrak p)\oplus\Delta^-(\mathfrak p))_\CC\otimes\Theta_1({\mathfrak p}_\CC, \tau)\otimes\,\,\,\bullet),$$ 
the operator $\dirac_{G/K}\otimes(\Delta^+(\mathfrak p)\oplus\Delta^-(\mathfrak p))_\CC$ is just the signature operator, an honest operator rather than a projective operator on $G/K$ and therefore the $\bullet$ should be in $R(K)[[q^{1\over2}]]$ rather than in $R(K)^{spin} [[q^{1\over2}]]$.  So we will still use original commutative diagram,
\be \label{commE1+}
\xymatrix{
K_{0}^G(M) \ar[rrrrrr]^-{ \ind_G(\bullet \,\,\, \otimes \Delta^+(TM)\oplus\Delta^-(TM))_\CC\otimes\Theta_1(T_\CC M,\tau))} & & & & & & K_{0}(C^*_{r}G) [[q^{1\over2}]]\\
K_{0}^K(N) \ar[u]^-{K-{\rm Ind}_K^G} \ar[rrrrrr]^-{\ind_K(\bullet \,\,\, \otimes \Delta^+(TN)\oplus\Delta^-(TN))_\CC\otimes \Theta_1(T_\CC N, \tau))} & & & & &
 & R(K) [[q^{1\over2}]] \ar[u]_-{\DInda_{LK}^{LG}
 } 
}.
\ee


Let $M$ be Spin but not $G$-Spin. Define the {\it equivariant Witten genus} by
\be
\varphi_{W,G}(M,\tau) =q_+\left( \ind_{\widetilde G}(\dirac_M \bigotimes\Theta(T_\CC M, \tau))\right)\in K_0(C^*_r(G))[[q]]
\ee
and the 
 {\it equivariant elliptic genera} by
\be 
\varphi_{1,G}(M,\tau) =\ind_{G}(\mathcal{B}_M \bigotimes\Theta_1(T_\CC M, \tau))\in K_0(C^*_r(G))[[q^{1\over2}]],
\ee
\be 
\varphi_{2,G}(M,\tau) =q_+\left( \ind_{\widetilde G}(\dirac_M \bigotimes\Theta_2(T_\CC M, \tau))\right)\in K_0(C^*_r(G))[[q^{1\over2}]].
\ee

If $\pi_1(G)=\Z_2$, then $\widetilde G$ is simply connected and therefore $\widetilde K$ is simply connected. Applying the commutative diagram (\ref{commW+}) similarly in the proof of Theorem \ref{vanishing}, we have
\begin{theorem}[Vanishing]\label{vanishing+} If $M$ is string, $\pi_1(G)=\Z_2$ and the $G$-action is properly non-trivial, then the $G$-equivariant Witten genus vanishes, i.e. $\varphi_{W,G}(M, \tau) =0 \in K_0(C^*_r(G))[[q]].$ 
\end{theorem}

Applying the commutative diagrams (\ref{commE1+}), (\ref{commE2+}) similarly in the proof of Theorem \ref{rigid} and noticing that $p_-(\CC)=0$, where $\CC$ is a trivial representation of $\widetilde K$, we have
\begin{theorem}[Rigidity] \label{rigid+}  In $K_0(C^*_r(G))[[q^{1\over2}]]$, $\varphi_{1,G}(M, \tau)$  is an integral linear combination of 
$$\DInda_{LK}^{LG}\left((4\delta_1(\tau))^a(16\varepsilon_1(\tau))^b\right);$$ 
and $\varphi_{2,G}(M, \tau)=0\in K_0(C^*_r(G))[[q^{1\over2}]]$. 
\end{theorem}

\end{document}